\numberwithin{equation}{section}
\newtheorem{theorem}{Theorem}[section]
\newtheorem{lemma}[theorem]{Lemma}
\theoremstyle{definition}
\newtheorem{definition}[theorem]{Definition}
\newtheorem{remark}{Remark}
\begin{document}
\title{The stabilization of wave equations with moving boundary}
\author{Lingyang Liu\  and\ Hang Gao}
\date{\centering {{\small{ {}School of Mathematics and Statistics, Northeast Normal University,\\ Changchun 130024,  China}}}} \maketitle
\begin{abstract}
In this paper, we consider the stabilization of wave equations with moving boundary. First, we show the solution behaviour of wave equation with Neumann boundary conditions, that is, the energy of wave equation with mixed boundary conditions
may decrease, increase or conserve depending on the different range of parameter. Second, we prove the wellposedness and stabilization for the wave equation with time delay and moving boundary.

\bigskip

\noindent{\bf Key words:} Wave equation, stabilization, moving boundary, time delay  \\
\noindent{\bf Mathematics Subject Classification(2020):} 34H15, 35L05, 35R37
\end{abstract}
\renewcommand{\thefootnote}{\fnsymbol{footnote}}
\footnote[0]{The second author is supported by the NSF of China under grants 11471070, 11771074, and 11371084.}
\footnote[0]{E-mail address: liuly938@nenu.edu.cn (Lingyang Liu),\ hangg@nenu.edu.cn (Hang Gao).}
\section{Introduction and main results}

For any $0<k<1,$ set $l_k(t)=1+kt.$ Denote by $Q^k$ the non-cylindrical domain in $\mathbb R^2:$
$Q^k=\{(x,t)\in\mathbb R^2|0<x<l_k(t),\ t>0\}.$ Given $T>0,$ put $Q^k_T=\{(x,t)\in Q^k|t<T\},$ $\Gamma_L=\{(0,t)\in\mathbb R^2|0<t<T\}$ and $\Gamma_R=\{(l_k(t),t)\in\mathbb R^2|0<t<T\}.$

The main objective of this paper is to investigate the boundary feedback stabilization of wave equations.

\subsection{Stabilization of wave equations}

The first purpose of this paper is to study the stabilization of
the following wave equation with boundary damping:
\begin{eqnarray}\label{e1}
\left\{
\begin{array}{rll}
& u_{tt}-u_{xx}=0&\mbox{ in }Q^k,\\[3mm]
&u_x(0,t)=0,u_x(l_k(t),t)+au_t(l_k(t),t)=0&\mbox{ on }(0,\infty),\\[3mm]
&u(x,0)=u^0,u_t(x,0)=u^1&\mbox{ in } (0,1),
\end{array}
\right.
\end{eqnarray}
where $a\in\mathbb R,$ $(u^0,u^1)$ is any given initial data, $a$ is the control and $u$ is the state variable. Let $r=(1,a)^\top\in\mathbb R^2,$
%Our first goal is to show that for any $(u^0,u^1)\in H^1_M(0,1)\times L^2(0,1),$ system \eqref{e1} admits a unique solution $(u,u_t)\in H^1_M\big(0,l(t)\big)\times L^2\big(0,l(t)\big).$
and
we see that $\displaystyle u_x\big(l_k(t),t\big)+au_t\big(l_k(t),t\big)=\frac{\partial u}{\partial r}\big(l_k(t),t\big).$ Obviously, the right boundary condition for system (\ref{e1}) is the directional derivative of $u$ along the direction $r$.
 We concern the relationship  between the control $a$  and the stability of (\ref{e1}).
To this aim, define the following energy of system (\ref{e1}):
\begin{equation*}
E_1(t)=\frac{1}{2}\int^{l_k(t)}_{0}\big[u^2_t(x,t)+u^2_x(x,t)\big]dx.
\end{equation*}

\begin{definition}
(1) The energy of \eqref{e1} decays only at a rate of $m$-th order polynomials, if there exist constants $c_1,c_2>0$ and an $m$-th order polynomial $\varphi(t)$ such that the energy of \eqref{e1} satisfies
$$
c_1\big[\varphi(t)\big]^{-1}E_1(0)\leq E_1(t)\leq c_2E_1(0)\big[\varphi(t)\big]^{-1},\quad t>0.
$$

(2) The energy of \eqref{e1} decays at a rate which is no less than $m$-th order polynomials, if there exists a constant $C>0$ and an $m$-th order polynomial $\varphi(t),$ such that the energy of \eqref{e1} satisfies
$$
E_1(t)\leq CE_1(0)\big[\varphi(t)\big]^{-1},\quad t>0.
$$

(3) The energy of \eqref{e1} decays at a rate which is no more than $m$-th order polynomials, if there exists a constant $C>0$ and an $m$-th order polynomial $\varphi(t),$ such that the energy of \eqref{e1} satisfies
$$
E_1(t)\geq CE_1(0)\big[\varphi(t)\big]^{-1},\quad t>0.
$$

(4) System \eqref{e1} is said to be exponentially stable, if there exist constants $C,\delta>0$ such that for any given $(u^0,u^1)\in H^1_L\big(0,l(t)\big)\times L^2\big(0,l(t)\big),$ the energy of \eqref{e1} satisfies
$$
E_1(t)\leq CE_1(0)e^{-\delta t},\quad t>0.
$$
\end{definition}

\medskip

We will prove that the energy of system (\ref{e1}) may increase, decrease or conserve, which is dependent on the different range of the control $a$. This result is stated as follows.

\medskip

\begin{theorem}\label{th1}
 Fix $\displaystyle0<k<1,$  and let $\displaystyle a_1=\frac{1-\sqrt{1-k^2}}{k},\ a_2=\frac{1+\sqrt{1-k^2}}{k},\ b_1=k\ \mbox{and}\ b_2=\frac{1}{k}.$ It is easy to check that $ a_1<b_1<b_2<a_2.$
 \vspace{1mm}

\noindent $(1)$ If $a<a_1\ \mbox{or}\ a>a_2,$ then the energy of system \eqref{e1} is increasing. Furthermore, there exist solutions of \eqref{e1} such that the corresponding energy increases only at a polynomial rate.

\vspace{1mm}
\noindent $(2)$ If $a=a_1\ \mbox{or}\ a=a_2,$ then the energy of system \eqref{e1} is conserved.

\vspace{1mm}
\noindent $(3)$ If $a_1<a<a_2,$ then the energy of system \eqref{e1} is decreasing. Moreover, there exist solutions of \eqref{e1} such that the corresponding energy decreases only at a polynomial rate. For $a=b_1$ or $a=b_2,$ the energy of \eqref{e1} decays only at a rate of first order polynomials; for $b_1<a<b_2,$ the energy of \eqref{e1} decays at a rate which is no less than first order polynomials; for $a_1<a<b_1$ or $b_2<a<a_2,$ the energy of \eqref{e1} decays at a rate which is no more than first order polynomials.
\end{theorem}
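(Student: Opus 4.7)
I would split the argument into two layers: an energy identity that explains the thresholds $a_1,a_2$ and handles part (2) together with the sign claims in (1) and (3), and a d'Alembert/characteristic analysis producing the polynomial rates in (1) and (3).

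First, I would differentiate $E_1$ using the Leibniz rule for the moving endpoint, substitute $u_{tt}=u_{xx}$, integrate by parts, and apply $u_x(0,t)=0$ together with $u_x(l_k(t),t)=-a u_t(l_k(t),t)$. This should yield
$$\dot E_1(t)=\tfrac{1}{2}\bigl(k(1+a^2)-2a\bigr)u_t^2\bigl(l_k(t),t\bigr)=\tfrac{k}{2}(a-a_1)(a-a_2)\,u_t^2\bigl(l_k(t),t\bigr),$$
since by Vieta $a_1+a_2=2/k$ and $a_1 a_2=1$. The sign of $(a-a_1)(a-a_2)$ gives the trichotomy and proves conservation in (2).

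For the rates I would use d'Alembert: $u(x,t)=F(x+t)+G(x-t)$. The Neumann condition at $x=0$ gives $F'(s)=-G'(-s)$. Parametrising the right boundary by $\eta=1-(1-k)t$, the companion coordinate is $\xi=1+\rho-\rho\eta$ with $\rho:=(1+k)/(1-k)>1$, and $(1+a)F'(\xi)+(1-a)G'(\eta)=0$ combined with the Neumann relation produces the functional equation
$$G'\bigl(T(\eta)\bigr)=\beta G'(\eta),\qquad T(\eta)=\rho\eta-(1+\rho),\quad \beta=\frac{1-a}{1+a}.$$
The affine map $T$ has slope $\rho$ and fixed point $\eta^{*}=1/k$, and iterating it (with the Neumann reflection) extends the values of $G'$ from the initial range $[-1,1]$ to all of $\mathbb{R}$.

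Rewriting the energy in characteristic variables,
$$E_1(t)=\int_{t}^{1+(1+k)t}F'(\xi)^2\,d\xi+\int_{-t}^{1-(1-k)t}G'(\eta)^2\,d\eta,$$
and changing variables $\eta\mapsto T(\eta)$ (Jacobian $\rho$, integrand multiplied by $\beta^2$) shows each iteration scales the relevant integral by $\rho\beta^2$. A direct computation gives $\rho\beta^2=1\Leftrightarrow ka^2-2a+k=0\Leftrightarrow a\in\{a_1,a_2\}$, consistent with (2), and $\rho\beta^2<1\Leftrightarrow a_1<a<a_2$. Since $n$ iterations stretch the $\eta$-range by $\rho^n$, covering $[-t,1-(1-k)t]$ needs $n(t)\approx\log(kt)/\log\rho$ iterations, leading heuristically to $E_1(t)\asymp(\rho\beta^2)^{n(t)}=t^{\,1+2\log|\beta|/\log\rho}$. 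An exponent of $-1$ forces $|\beta|=1/\rho$, which solving on $(-1,1)$ and on $(1,\infty)$ gives exactly $a=k=b_1$ and $a=1/k=b_2$; the inequalities $|\beta|<1/\rho$ on $(b_1,b_2)$ and $|\beta|>1/\rho$ on $(a_1,b_1)\cup(b_2,a_2)$ then yield the "no less" and "no more" than first-order-polynomial claims. The growth claim in (1) follows along the same lines since $\rho\beta^2>1$ there.

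The main obstacle will be converting the "factor of $\rho\beta^2$ per iterate" heuristic into rigorous two-sided bounds. The interval $[-t,1-(1-k)t]$ is not an exact $T$-image of $[-1,1]$, so one must cover it by a controlled number of consecutive $T$-images, track remainder terms near the fixed point $\eta^{*}=1/k$, and combine this with the parallel estimate for the $F'$-integral (which, via $F'(s)=-G'(-s)$, reduces to a $G'$-integral on the other side of $\eta^{*}$). A second delicate point is constructing explicit initial data that saturate the lower bounds, both in the sub-ranges $(a_1,b_1)\cup(b_2,a_2)$ and in the polynomial-growth case of (1), since generic data may decay strictly faster or grow strictly slower than the extremal rate allowed by the range of $a$.
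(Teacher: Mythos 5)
Your energy identity and the resulting trichotomy (the monotonicity/conservation statements in (1)--(3)) coincide with the paper's first step, including the factorization through $f(a)=ka^{2}-2a+k$. For the rates you take a genuinely different route: the paper multiplies the equation by $xu_{x}$ and by $(T-t)u_{t}$, integrates over $Q^{k}_{T}$, and reads the estimates off the sign of $h(a)=kg(a)+f(a)=2[ka^{2}-(k^{2}+1)a+k]$ (roots $b_{1}=k$, $b_{2}=1/k$), getting the two-sided first-order bound at $a\in\{b_{1},b_{2}\}$ and one-sided bounds on the neighbouring ranges; the ``only at a polynomial rate'' existence claims are then settled by the explicit solutions $u=f(t+\frac1k+x)+f(t+\frac1k-x)$ with $f'(z)=z^{\ln\mu_{a}/\ln\theta_{k}}$, $\mu_{a}=\frac{1-a}{1+a}$, $\theta_{k}=\frac{1+k}{1-k}$ --- precisely the self-similar profile your renormalization predicts. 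Your characteristic/functional-equation route is sound and, once completed, even gives a sharper per-solution asymptotic than the theorem asserts.

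As written, though, there is a genuine gap at the quantitative core: $E_{1}(t)\asymp(\rho\beta^{2})^{n(t)}E_{1}(0)$ is only asserted heuristically, and the repair you sketch would not produce the lower bounds. Folding the $F'$-integral into the $G'$-integral via $F'(s)=-G'(-s)$ places it on $[-1-(1+k)t,\,-t]$, immediately to the left of $[-t,\,1-(1-k)t]$ (the fixed point $\eta^{*}=1/k$ never meets the energy interval, so ``remainder terms near $\eta^{*}$'' is a red herring), giving $E_{1}(t)=\int_{-1-(1+k)t}^{\,1-(1-k)t}G'(\eta)^{2}\,d\eta$; this interval generically contains no complete image $T^{n}([-1,1])$, so covering it by $T$-images yields only the upper bound. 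The missing observation is that $T\bigl(1-(1-k)t\bigr)=-1-(1+k)t$, i.e.\ the energy interval is exactly one fundamental domain $[T(s_{t}),s_{t}]$ with $s_{t}=1-(1-k)t$. Writing $s_{t}=T^{n}(\sigma)$, $\sigma\in[-1,1)$, with $n=n(t)$ within $1$ of $\log_{\rho}(1+kt)$, the functional equation gives $E_{1}(t)=(\rho\beta^{2})^{n}\bigl[\int_{-1}^{\sigma}G'(\eta)^{2}\,d\eta+\rho\beta^{2}\int_{\sigma}^{1}G'(\eta)^{2}\,d\eta\bigr]$, hence $\min(1,\rho\beta^{2})(\rho\beta^{2})^{n}E_{1}(0)\le E_{1}(t)\le\max(1,\rho\beta^{2})(\rho\beta^{2})^{n}E_{1}(0)$ and $(\rho\beta^{2})^{n}\asymp(1+kt)^{\,1+2\ln|\beta|/\ln\rho}$, which delivers every rate claim with $\varphi(t)=1+kt$ and makes the ``there exist solutions'' clauses in (1) and (3) automatic for all nontrivial data, so no saturating examples need be built. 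You must still treat $a=\pm1$ separately ($a=-1$: only trivial solutions; $a=1$: $\beta=0$ and the energy vanishes for $t\ge 2/(1-k)$, consistent with the case $b_{1}<a<b_{2}$). Without this fundamental-domain identity (or the paper's multiplier identities plus explicit solutions), the rate portions of (1) and (3) remain unproven in your proposal.
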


Stabilization theory has been widely investigated for hyperbolic equations in cylindrical domains and there have been a great number of results (see \cite{s1,f1,V2,V,Rid,t}
and the references therein).
In physical situations, many phenomena evolve in domains whose boundary has moving parts. For instance, consider a heat process in a combustion chamber  attached  a piston, where part of the boundary moves with the motion of the piston (see \cite{III}). Another example is the vibration of an extendible flexible beam with right end  supported by a movable base and  left end  imbedded inside a bearing permitting extension and contraction of the beam (see \cite{wang123}). For the wave equation with moving boundary, qualitative theory results has been obtained in the literature(see, for instance, \cite{w,c2,01,c3} and references therein). However,
very few results  on the stabilization of hyperbolic equations in non-cylindrical domains have been known. To the best of our knowledge,
\cite{III} is the first to treat stabilizability problem of wave equation in a domain with moving boundary. The authors proved that the wave equation with moving boundary is stabilizable with viscous damping and compensation. In \cite{m3}, the author proved that the wave equation in a finite moving domain is stable, when the movement is assumed to move slower than light and periodically.
Further, an optimal feedback stabilization of a string with moving boundary is treated, as the author \cite{m7} showed that, if the movement is not too fast, the energy decays exponentially. Recently, in \cite{m6}, the authors analyzed the stabilization of wave dynamics by moving boundary, while the domain remains bounded, and undergoes phases of expansion and contraction. The stabilization of the wave equation with moving boundary and Dirichlet-Neuman boundary conditions was considered in \cite{m5}, where the energy decays exponentially when the movement move slower than light and periodically. In this paper, we
study the stabilization of the wave equation (\ref{e1}) with moving boundary, and the movement satisfies $l_k(t)=1+kt$, $k\in(0,1)$. %The same problem for this system (the equation and the boundary conditions are the same as (\ref{e1})) in cylindrical domain was discussed in \cite{Rid}. Indeed, the author established the exponential decay of the energy for this system in cylindrical domain, when
% the following system in cylindrical domain:
%\begin{equation}\label{c1}
%\left\{
%\begin{array}{rll}
%&u_{tt}-u_{xx}=0&  0<x<1, \ 0<t<T,\\[3mm]
%&u(0,t)=0,u_x(1,t)+a_0 u_t(1,t)=0&0<t<T,\\[3mm]
%&u(x,0)=u^0(x),u_t(x,0)=u^1(x)&\ 0<x<1,
%\end{array}
%\right.
%\end{equation}
%$(u_0,u_1)\in H_L^1(0,1)\times L^2(0,1)$, $a>0\ \mbox{and} \ a\neq1.$
%Notice that the system (\ref{c1}) and (\ref{e1})
% have the same boundary conditions.
On the other hand, if the left boundary condition of (\ref{e1}) is replaced by the Dirichlet boundary conditions, we refer to \cite{h} that the authors studied the stabilization of the one-dimensional wave equation with general moving boundary. Although the moving boundary we considered is a special boundary, we give a more explicit energy estimate for system (\ref{e1}).

%Strictly speaking, there exists a positive constant $C$ such that the energy $E(t)\leq Ce^{-\omega t}E(0),\ t>0,$ where $\displaystyle\omega=\ln\big(|\frac{1+a}{1-a}|\big).$ If $a=1,$ then $E(t)=0,$ $\forall t\geq2.$

%However, an interesting phenomenon occurs in non-cylindrical domains. We claim that the energy of \eqref{e1} may decay only at a rate of polynomials, may go up at a rate of polynomials, or it may be conserved for some $a>0.$

%\begin{remark}
%When $a=\displaystyle\frac{1}{k},$ the moving boundary condition of \eqref{e1} turns into the Dirichlet condition. At this point, the energy of \eqref{e1} decays. When $a=-k,$ $\displaystyle u_x(l(t),(t))-ku_t(l(t),t)=\frac{\partial u}{\partial n}(l(t),t),$ where $n$ is the exterior normal to $Q^k$ on $\Gamma_R.$ At this time, the energy of \eqref{e1} grows.
%\end{remark}

%We can also consider a Neumann problem. Different from \eqref{e1}, whose energy might tend to infinity with time, we find that for a neumann problem, although the energy grows, it is always bounded.
%
%Since \eqref{e1} is not exponentially stable, it is natural to find another way to make wave equations stable exponentially. We put a damping and a compensation into wave equations.

\subsection{Stabilization of wave equation with time delay}
The third objective of this paper is devoted to studying the stabilization of
the following wave equation with time delay:
\begin{eqnarray}\label{es}
\left\{
\begin{array}{rll}
& u_{tt}-u_{xx}=0&\mbox{ in }Q^k,\\[3mm]
&u(0,t)=0&\mbox{ on }(0,\infty),\\[3mm]
&u_x(l_k(t),t)=-\mu_1u_t(l_k(t),t)-\mu_2u_t(l_k(t-\tau),t-\tau)&\mbox{ on }(0,\infty),\\[3mm]
&u(x,0)=u^0,u_t(x,0)=u^1 &\mbox{ in } (0,1), \\[3mm]
&u_t(l_k(t-\tau),t-\tau)=g_0(t-\tau)&\mbox{ on }(0,\tau),
\end{array}
\right.
\end{eqnarray}
where $\mu_1,\mu_2\in\mathbb R,$ $(u^0,u^1,g_0)$ is any given initial value and delay $\tau>0.$

\medskip
Set $H^1_L(0,1)=\{u\in H^1(0,1)|\mbox{the trace}\ u(0)=0\},$

The energy of (\ref{es}) is defined by
\begin{equation}
E_2(t)=\frac{1}{2}\int^{l_k(t)}_{0}\big[u^2_t(x,t)+u^2_x(x,t)\big]dx+\frac{\xi}{2}\int^1_0u^2_t\big(l_k(t-\tau\rho),t-\tau\rho\big)d\rho,
\end{equation}
where $\xi$ is a positive coefficient.

We establish a relationship between stability and the sizes of coefficients $\mu_1,\mu_2$ and time delay $\tau$ for \eqref{es} in this paper. The result is stated as follows.

\medskip

\begin{theorem}\label{th2}
\noindent $(1)$ Let $\displaystyle \frac{1-\sqrt{1-k^2}}{k}<\mu_1<\frac{1+\sqrt{1-k^2}}{k}$. If $\displaystyle\mu_2<\frac{-\left|k\mu_1-1\right|+\sqrt{1-k^2}}{k},$ with some $\tau$(depending on $\mu_1,\mu_2,\xi$), then the energy of system \eqref{es} remains to decrease.

\vspace{2mm}
\noindent $(2)$ For any $\mu_1\in\mathbb R,$ if $\displaystyle\mu_2\geq\frac{\left|k\mu_1-1\right|+\sqrt{1-k^2}}{k}$, then the energy of system \eqref{es} is always increasing with some $\tau$(depending on $\mu_1,\mu_2,\xi$).
\end{theorem}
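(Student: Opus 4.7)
The plan is to adapt the standard delay-PDE energy method to the moving-boundary setting. First I introduce the auxiliary variable
\[
z(\rho,t):=u_t\bigl(l_k(t-\tau\rho),\,t-\tau\rho\bigr),\qquad (\rho,t)\in(0,1)\times(0,\infty),
\]
so that $z$ satisfies the transport equation $\tau z_t+z_\rho=0$, with $z(0,t)=u_t(l_k(t),t)=:U_0$ and $z(1,t)=u_t(l_k(t-\tau),t-\tau)=:U_\tau$; the right boundary condition becomes $u_x(l_k(t),t)=-\mu_1U_0-\mu_2U_\tau$, and the history integral in $E_2$ becomes $\tfrac{\xi}{2}\int_0^1 z(\rho,t)^2\,d\rho$.

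Differentiating $E_2$ in $t$, the Leibniz contribution from the moving endpoint is $\tfrac{k}{2}\bigl[U_0^2+u_x(l_k,t)^2\bigr]$; combining $u_{tt}=u_{xx}$, integration by parts and $u(0,t)=0$ gives the bulk boundary term $U_0\,u_x(l_k,t)$; and the identity $z_t=-\tau^{-1}z_\rho$ integrated in $\rho$ yields $\tfrac{\xi}{2\tau}(U_0^2-U_\tau^2)$. Substituting $u_x(l_k,t)=-\mu_1U_0-\mu_2U_\tau$ collapses everything into the quadratic form
\[
\dot E_2(t)=\tfrac12\,(U_0,\,U_\tau)\,M\,(U_0,\,U_\tau)^{\!\top},\qquad
M=\begin{pmatrix} k\mu_1^2-2\mu_1+k+\tfrac{\xi}{\tau} & \mu_2(k\mu_1-1) \\ \mu_2(k\mu_1-1) & k\mu_2^2-\tfrac{\xi}{\tau} \end{pmatrix}.
\]

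For part (1), write $\alpha:=\xi/\tau$ and seek $\alpha$ making $M$ negative semidefinite. The diagonal conditions force $k\mu_2^2\le\alpha\le 2\mu_1-k-k\mu_1^2$, whose right endpoint is positive exactly when $\mu_1\in(a_1,a_2)$. On this interval $\det M$ is a concave quadratic in $\alpha$ with maximum $\tfrac14\bigl[(2\mu_1-k-k\mu_1^2-k\mu_2^2)^2-4\mu_2^2(k\mu_1-1)^2\bigr]$. A short completion of the square (using $(k\mu_1-1)^2=k^2\mu_1^2-2k\mu_1+1$) identifies the nonnegativity of this maximum with $(k|\mu_2|+|k\mu_1-1|)^2\le 1-k^2$, which is exactly the stated hypothesis on $\mu_2$. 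Choosing such $\alpha$ and any $\tau>0$, setting $\xi=\alpha\tau$ yields $\dot E_2\le 0$. Part (2) is the mirror argument: now $M$ must be positive semidefinite with $\alpha\in[2\mu_1-k-k\mu_1^2,\,k\mu_2^2]$, and the analogous optimization collapses to $(k\mu_2-|k\mu_1-1|)^2\ge 1-k^2$, which is precisely the hypothesis $\mu_2\ge(|k\mu_1-1|+\sqrt{1-k^2})/k$.

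The main obstacle is algebraic bookkeeping rather than any deep estimate: correctly isolating the moving-boundary contribution $\tfrac{k}{2}(U_0^2+u_x(l_k,t)^2)$ (which both deforms the usual damping term and couples with the delay through $u_x=-\mu_1U_0-\mu_2U_\tau$), and verifying that the optimized $\det M$ collapses cleanly into the stated inequalities in $|k\mu_1-1|$ and $\sqrt{1-k^2}$. Well-posedness of \eqref{es} in the time-dependent energy space $H^1_L(0,l_k(t))\times L^2(0,l_k(t))\times L^2(0,1)$, which justifies the termwise differentiation of $E_2$ for strong solutions, is obtained beforehand by a standard semigroup argument on a moving domain, in the spirit of \cite{h,m5}.
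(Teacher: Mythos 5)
Your proposal is correct and essentially the same as the paper's argument: both reduce $E_2'(t)$ to the identical quadratic form in $u_t\big(l_k(t),t\big)$ and $u_t\big(l_k(t-\tau),t-\tau\big)$ (the paper's \eqref{E'3}) and then choose the ratio $\xi/\tau$ so that this form has a fixed sign, which yields precisely the stated thresholds on $\mu_2$. The only difference is how the sign is certified --- the paper bounds the cross term via Cauchy's inequality and splits into the cases $k\mu_1-1=0$, $k\mu_1-1<0$, $k\mu_1-1>0$, whereas you use the $2\times 2$ semidefiniteness test and optimize the determinant over $\alpha=\xi/\tau$ --- and after this optimization the two certificates produce the same admissible window for $\tau$.
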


In the past decades, many authors focus on the stabilization for the wave equation with time delay in cylindrical domains. We mention
\cite{DLP,np,ny} and the references therein for a detail statement. In particular, the stabilization of the one-dimensional wave equation with time delay in cylindrical domain was discussed in \cite{x}, where the wave equation is exponentially stable when $\mu_1>\mu_2$ and the system is unstable when $\mu_1<\mu_2$. Moreover, when $\mu_1=\mu_2$, if $\tau\in(0, 1)$ is rational, then the system is unstable; if $\tau\in(0, 1)$ is irrational,  the system is asymptotically stable.
However, as far as we know, this paper is the first attempt to study the stabilization problem for the wave equation with time delay and moving boundary.
It is more complex to treat the stabilization problem for system (\ref{es}) with moving boundary than the case in cylindrical domain. Moreover,
we observe that, if $\mu_2=0$, then the system (\ref{es}) degenerates to the Dirichlet system without time delay, and the conclusion Theorem \ref{th2} (1) will be the same as that in \cite{h}.

\medskip

The paper is organized as follows. In Section 2, we study  the  well-posedness of the problem \eqref{e1}. In Section 3, we give the proof of Theorem \ref{th1} and some examples. In Sections 4,  we prove that problem \eqref{es} is well-posed. Section 5 is devoted to giving the proof of Theorem \ref{th2}.

\section{The wellposedness of \eqref{e1}}
For preliminary, we give some notations first.

$(1)$ Let $\mathbb N$ denote the set of all positive integers. For any $N\in\mathbb N,$ let $\Omega$ be a domain of $\mathbb R^N$ and $\mu_N$ be the Lebesgue measure in $\mathbb R^N.$ Given $i\in\mathbb N$ and $1\leq i\leq N,$ write

$P(x_1,\cdots,x_{i-1},x_{i+1},\cdots,x_N)=\{(x_1,\cdots,x_{i-1},\xi,x_{i+1},\cdots,x_N)\in\mathbb R^N|\xi\in\mathbb R\}$
and $M_i$ for the set of all points $(x_1,\cdots,x_{i-1},x_{i+1},\cdots,x_N)\in\mathbb R^{N-1}$ such that

$P(x_1,\cdots,x_{i-1},x_{i+1},\cdots,x_N)\bigcap\Omega\neq\emptyset.$

$(2)$ We will denote by $AC_i(\Omega)$ the set of all functions $u$ defined on $\Omega$ with the following property:

if the function $u$ is not absolutely continuous on line $P(x_1,\cdots,x_{i-1},x_{i+1},\cdots,x_N),$ then $\mu_{N-1}(M_i)=0.$

Let us denote by $\displaystyle[\frac{\partial u}{\partial x_i}]$ the classical partial derivative of $u$ with respect to $x_i.$ Since $u$ is absolutely continuous for almost lines $P(x_1,\cdots,x_{i-1},x_{i+1},\cdots,x_N),$ it exists almost everywhere in $\Omega.$

$(3)$ Assume that $\Sigma\subset\mathbb R^N$ is an open set. Put $D(\Sigma)=C^\infty_0(\Sigma)$ and use the symbol $D'(\Sigma)$ to denote its dual. $H^1_{loc}(\Sigma)$ is defined as the space of distributions $\phi$ such that for all $\psi\in D(\mathbb R^N),$ $\psi\phi\in H^1(\Sigma).$

Before proving the main theorem, we introduce three lemmas.
\begin{lemma}[see \cite{AOS}; Page 274]\label{lem1}
Let $u\in L_{1,loc}(\Omega)$ and suppose that its distributional derivative $\displaystyle\frac{\partial u}{\partial x_i}$ is an element of $L_{1,loc}(\Omega),$ then there exists a function $\widetilde u\in AC_i(\Omega)$ which is equal to $u$ almost everywhere in $\Omega.$ Moreover,
\begin{equation*}
[\frac{\partial{\widetilde u}}{\partial x_i}]=\frac{\partial u}{\partial x_i},\ \mbox{almost everywhere in}\ \Omega.
\end{equation*}
\end{lemma}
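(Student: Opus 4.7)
The plan is to prove the result via mollification combined with Fubini's theorem, first locally on rectangles aligned with the axes and then patched into a single global representative. Since the assertion is purely local, I would begin by exhausting $\Omega$ by a countable family of relatively compact open rectangles $R=I_1\times\cdots\times I_N\subset\subset\Omega$, construct the representative on each, and defer the gluing question to the end.

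On a fixed rectangle $R$, take mollifications $u_\ep=u*\rho_\ep$ on a slightly enlarged rectangle. By the hypotheses $u,\partial u/\partial x_i\in L_{1,loc}(\Omega)$ one has $u_\ep\to u$ and $\partial u_\ep/\partial x_i\to\partial u/\partial x_i$ in $L^1(R)$. Extracting a subsequence $\ep_n\to 0$ and invoking Fubini produces a set $E\subset\prod_{j\neq i}I_j$ of full $\mu_{N-1}$-measure such that on every line $P(y)\cap R$ indexed by $y\in E$, both $u_{\ep_n}$ and $\partial u_{\ep_n}/\partial x_i$ converge in $L^1$ along that line to the corresponding restrictions of $u$ and $\partial u/\partial x_i$. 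For the smooth $u_{\ep_n}$ the fundamental theorem of calculus on each such line reads
\begin{equation*}
u_{\ep_n}(y,x_i)-u_{\ep_n}(y,x_i^0)=\int_{x_i^0}^{x_i}\frac{\partial u_{\ep_n}}{\partial x_i}(y,\xi)\,d\xi,
\end{equation*}
and after fixing a base point $x_i^0$ at which the values of $u_{\ep_n}(y,\cdot)$ converge (pointwise a.e.\ along the line), letting $n\to\infty$ produces on $P(y)\cap R$ an absolutely continuous function of $x_i$ that coincides almost everywhere with $u(y,\cdot)$ and whose classical derivative equals $\partial u/\partial x_i(y,\cdot)$ a.e. Declaring $\widetilde u$ to be this function on lines indexed by $E$ and setting $\widetilde u=u$ on the remaining $\mu_{N-1}$-null set of lines yields a member of $AC_i(R)$ which equals $u$ almost everywhere in $R$ and satisfies $[\partial\widetilde u/\partial x_i]=\partial u/\partial x_i$ a.e.\ in $R$.

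The main obstacle is the gluing step. On overlapping rectangles $R_k$, $R_{k'}$ the two candidates $\widetilde u_k$, $\widetilde u_{k'}$ both agree with $u$ almost everywhere on $R_k\cap R_{k'}$, and since both are absolutely continuous in $x_i$ along almost every axis-parallel line meeting the intersection, they must coincide identically on each such line outside a $\mu_{N-1}$-null transversal exceptional set. Because a countable union of null sets is null, the local representatives paste together into a single $\widetilde u\in AC_i(\Omega)$ equal to $u$ almost everywhere. The identity $[\partial\widetilde u/\partial x_i]=\partial u/\partial x_i$ a.e.\ in $\Omega$ then follows immediately, since $\widetilde u$ is, on a.e.\ line, the indefinite integral of $\partial u/\partial x_i$ along that line, and differentiation of an absolutely continuous function recovers its integrand a.e.
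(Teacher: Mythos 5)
The paper offers no proof of this lemma: it is quoted as a known result from \cite{AOS} (Kufner, John and Fucik, \emph{Function Spaces}, p.~274) and used as a black box, so there is nothing internal to compare your argument with. Your mollification-plus-Fubini proof is the standard self-contained argument for this ACL-type characterization and is essentially correct: $L^1$ convergence of $u_\varepsilon$ and $\partial u_\varepsilon/\partial x_i$ on a rectangle, a subsequence giving $L^1$ convergence along almost every line parallel to the $x_i$-axis, the fundamental theorem of calculus on those lines for the smooth approximations, identification of the limit as an indefinite integral of $\partial u/\partial x_i$, and finally gluing over a countable cover by rectangles. Two points are worth tightening. First, to anchor the base point $x_i^0$ you need pointwise convergence of $u_{\varepsilon_n}$ somewhere on the line, but $L^1$ convergence along the line only yields this after a further, possibly line-dependent, subsequence; the clean fix is to choose the subsequence once so that $u_{\varepsilon_n}\to u$ pointwise a.e.\ in $R$, after which Fubini gives, for a.e.\ line, convergence at a.e.\ point of that line. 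Second, absolute continuity on lines must be read as absolute continuity on compact subintervals of $P\cap\Omega$ (local absolute continuity), since with only $L_{1,loc}$ data nothing stronger can hold; with that reading your patching step, discarding the countable union of exceptional $\mu_{N-1}$-null sets of base points coming from all rectangles and all overlaps, goes through and yields $\widetilde u\in AC_i(\Omega)$ with $[\partial\widetilde u/\partial x_i]=\partial u/\partial x_i$ a.e.\ in $\Omega$.
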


\begin{lemma}[see \cite{JPN}; Page 957]\label{lem2}
Let $\Sigma\subset\mathbb R^2$ be a domain such that for each $\eta\in\mathbb R,$ $J^\eta:=\{\xi\in\mathbb R|(\xi,\eta)\in\Sigma\}$ and for each $\xi\in\mathbb R,$ $J_\xi:=\{\eta\in\mathbb R|(\xi,\eta)\in\Sigma\}$ are intervals of $\mathbb R.$ Set $K_1=\{\xi\in\mathbb R|\Sigma\cap(\{\xi\}\times\mathbb R)\neq\emptyset\}$ and $K_2=\{\eta\in\mathbb R|\Sigma\cap(\mathbb R\times\{\eta\})\neq\emptyset\},$ then $K_1$ and $K_2$ are nonempty open intervals of $\mathbb R.$ Assume that $\varphi\in H^1_{loc}(\Sigma)$ and $\varphi_{\xi\eta}=0$ in $D'(\Sigma),$ then there exist $f\in H^1_{loc}(K_1)$ and $g\in H^1_{loc}(K_2)$ such that
$$
\varphi(\xi,\eta)=f(\xi)+g(\eta),\ \mbox{almost everywhere in}\ \Sigma.
$$
\end{lemma}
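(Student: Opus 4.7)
The plan is to combine the slice-convexity assumption on $\Sigma$ with two applications of Lemma \ref{lem1}: first to identify $\varphi_\xi$ with a function depending only on $\xi$, and then to identify the remaining piece with a function depending only on $\eta$.

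First I would dispose of the elementary claims about $K_1$ and $K_2$. They are nonempty because $\Sigma \neq \emptyset$, open because $\Sigma$ is open in $\mathbb R^2$, and they are intervals because they are images of the connected set $\Sigma$ under the two continuous coordinate projections onto $\mathbb R$.

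The core of the argument proceeds as follows. Since $\varphi\in H^1_{loc}(\Sigma)$, the distribution $\varphi_\xi$ lies in $L^2_{loc}(\Sigma)\subset L_{1,loc}(\Sigma)$, and by hypothesis $\partial_\eta(\varphi_\xi)=0$ in $D'(\Sigma)$. Applying Lemma \ref{lem1} to $\varphi_\xi$ with $i=2$ yields a representative $\widetilde{\varphi_\xi}\in AC_2(\Sigma)$ whose classical $\eta$-derivative vanishes almost everywhere. Since an absolutely continuous function on an interval with a.e. zero derivative is constant, $\widetilde{\varphi_\xi}(\xi,\cdot)$ is constant on $J_\xi$ for almost every $\xi\in K_1$. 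The slice-convexity hypothesis enters here in a crucial way: because each $J_\xi$ is a single interval rather than a union of disjoint ones, this common value unambiguously defines a measurable function $h:K_1\to\mathbb R$ with $\varphi_\xi(\xi,\eta)=h(\xi)$ a.e.\ in $\Sigma$, and Fubini places $h$ in $L^2_{loc}(K_1)$. Fixing $\xi_0\in K_1$, setting $f(\xi)=\int_{\xi_0}^{\xi}h(s)\,ds$, and noting $\partial_\xi(\varphi-f)=0$ in $D'(\Sigma)$, the same argument with $i=1$ applied to $\varphi-f$, now using that each $J^\eta$ is an interval, produces $g\in H^1_{loc}(K_2)$ with $\varphi-f=g$ a.e.\ in $\Sigma$. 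The claimed regularity of $f$ and $g$ is then read off from $h\in L^2_{loc}(K_1)$ and its counterpart for $g$.

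The delicate point is the globalization step. Lemma \ref{lem1} only gives information on almost every line parallel to a coordinate axis, so a priori one is only guaranteed a decomposition on every rectangle contained in $\Sigma$. The hypothesis that every $J_\xi$ and every $J^\eta$ is a single interval is precisely what rules out the possibility that some slice splits into several components, which would otherwise force the "constant along the slice" value to be multi-valued and obstruct the construction of a single-valued $h$. Mere connectedness of $\Sigma$ would not suffice: on an annulus, for instance, horizontal slices have two components and the naive reconstruction fails. Under the slice-convexity assumption, however, $h$ and subsequently $g$ extend consistently to the full intervals $K_1$ and $K_2$, and the decomposition $\varphi=f+g$ is valid globally on $\Sigma$.
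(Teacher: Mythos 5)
The paper never proves this lemma: it is imported verbatim from the cited reference \cite{JPN} (p.~957), so there is no in-paper argument to compare against, and your proposal has to be judged as a self-contained reconstruction. As such it is correct, and it uses exactly the tool the paper sets up for this purpose, Lemma~\ref{lem1}: apply it to $\varphi_\xi$ in the $\eta$-direction to conclude $\varphi_\xi(\xi,\eta)=h(\xi)$ a.e.\ (constancy on each vertical slice being legitimate precisely because $J_\xi$ is a single interval), integrate $h$ to build $f\in H^1_{loc}(K_1)$, then apply the same lemma to $\varphi-f$ in the $\xi$-direction using that each $J^\eta$ is an interval; your identification of slice-convexity as the indispensable hypothesis, with the annulus as the cautionary example, is exactly right. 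Two steps are stated more tersely than they deserve, though both are routine: (i) that $h$ is measurable and lies in $L^2_{loc}(K_1)$ is not literally one application of Fubini on $\Sigma$; one should cover a compact subinterval of $K_1$ by finitely many open rectangles contained in $\Sigma$ and apply Fubini on each, and note also that ``classical $\eta$-derivative zero a.e.\ in $\Sigma$'' yields ``zero a.e.\ on almost every vertical line'' only after a Fubini slicing, which is what makes the AC representative constant on $J_\xi$ for a.e.\ $\xi$; (ii) the membership $g\in H^1_{loc}(K_2)$ is not simply ``read off'': fix $\eta_0\in K_2$, choose $(\xi_0,\eta_0)\in\Sigma$ and a rectangle $R=(\xi_0-\delta,\xi_0+\delta)\times(\eta_0-\epsilon,\eta_0+\epsilon)\subset\Sigma$, and use $g(\eta)=\varphi(\xi,\eta)-f(\xi)$ together with $\varphi\in H^1(R)$ and Fubini along a.e.\ $\xi$-slice to get $g\in H^1(\eta_0-\epsilon,\eta_0+\epsilon)$; since such neighborhoods cover $K_2$, this gives the claimed regularity. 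With these routine patches your argument is complete and is, in substance, the standard proof one would find behind the citation.
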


\begin{lemma}[see \cite{JPN}; Page 958]\label{l3}
If $u\in H^1_{loc}(Q^k)$ satisfies $u_{tt}-u_{xx}=0$ in $D'(Q^k),$ then exist $f,g\in H^1_{loc}(\mathbb R)$ such that
$$
u(x,t)=f(t+x)+g(t-x),\ \mbox{almost everywhere in}\ Q^k.
$$
Moreover, $u$ can be continuously extended to $\overline {Q^k},$ the traces of $u$ on each line $\{(x,t)\in Q^k|t=t_0\}$ are in $H^1((0,l_k(t_0)))$ and the traces of $u$ on the boundary $\partial Q^k$ of $Q^k$ are in $H^1_{loc}(\partial Q^k).$
\end{lemma}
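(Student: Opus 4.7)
The plan is to pass to characteristic coordinates $\xi=t+x,\ \eta=t-x$ so that the wave equation becomes $\varphi_{\xi\eta}=0$, and then apply Lemma \ref{lem2} to obtain the d'Alembert decomposition. Writing $\Sigma$ for the image of $Q^k$ under the linear isomorphism $(x,t)\mapsto(\xi,\eta)$, a direct substitution gives
\[
\Sigma=\{(\xi,\eta)\in\mathbb R^2\,:\,\xi>\eta,\ \xi+\eta>0,\ (1-k)\xi-(1+k)\eta<2\},
\]
which is open and connected. For each fixed $\eta$ (respectively $\xi$), the slice in the other variable is the intersection of three open half-lines, hence empty or an open interval, so the hypotheses of Lemma \ref{lem2} are met. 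A short calculation also yields the projections $K_1=(0,\infty)$ and $K_2=(-1,\infty)$.

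Setting $\varphi(\xi,\eta):=u\bigl((\xi-\eta)/2,(\xi+\eta)/2\bigr)$, I would verify that $\varphi\in H^1_{loc}(\Sigma)$ and that the distributional chain rule yields $4\varphi_{\xi\eta}=u_{tt}-u_{xx}=0$ in $D'(\Sigma)$. Lemma \ref{lem2} then produces $f\in H^1_{loc}(K_1)$ and $g\in H^1_{loc}(K_2)$ with $\varphi(\xi,\eta)=f(\xi)+g(\eta)$ a.e.\ in $\Sigma$. Extending $f$ and $g$ arbitrarily to $H^1_{loc}(\mathbb R)$ (for instance by constants outside $K_1,K_2$) does not affect the decomposition on $Q^k$, so we obtain $u(x,t)=f(t+x)+g(t-x)$ a.e.\ on $Q^k$.

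For the continuity and trace claims, I would replace $f$ and $g$ by their continuous representatives (available since $H^1_{loc}(\mathbb R)\hookrightarrow C(\mathbb R)$); then $(x,t)\mapsto f(t+x)+g(t-x)$ is continuous on $\overline{Q^k}$. On the horizontal slice $t=t_0$ the trace is $x\mapsto f(t_0+x)+g(t_0-x)$, which lies in $H^1(0,l_k(t_0))$ by restriction of $f,g$ to bounded intervals. The trace on $\Gamma_L$ is $t\mapsto f(t)+g(t)\in H^1_{loc}$, and on $\Gamma_R$ it reads $t\mapsto f((1+k)t+1)+g((1-k)t-1)$, which is $H^1_{loc}$ because $k\in(0,1)$ makes both substitutions affine with positive (nonzero) slope, preserving $H^1_{loc}$ under composition.

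The principal subtlety I expect is the distributional justification of the change of variables --- namely establishing $\varphi_{\xi\eta}=0$ in $D'(\Sigma)$ from $u_{tt}-u_{xx}=0$ in $D'(Q^k)$ for merely $H^1_{loc}$ solutions. This requires Lemma \ref{lem1} to identify classical almost-everywhere partial derivatives with distributional ones, together with a careful treatment of test functions under the invertible linear change of variables. Everything after this step is a direct consequence of Lemma \ref{lem2} and standard one-dimensional Sobolev theory.
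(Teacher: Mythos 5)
The paper never proves this lemma: it is quoted from \cite{JPN} (page 958), and the preliminary Lemmas \ref{lem1}--\ref{lem2} together with the remark following Theorem \ref{th3} (which records the same $K_1=(0,+\infty)$, $K_2=(-1,+\infty)$) are precisely the ingredients you deploy. So your route --- passing to characteristic coordinates, checking that the slices of $\Sigma$ are intervals, invoking Lemma \ref{lem2}, and reading the trace statements off the one-dimensional Sobolev embedding --- is the intended one, and your computation of $\Sigma$, $K_1$, $K_2$ and of the factor $4\varphi_{\xi\eta}=u_{tt}-u_{xx}$ is correct.

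There is, however, one genuine gap in your last step: the passage from $f\in H^1_{loc}(K_1)$, $g\in H^1_{loc}(K_2)$ to $f,g\in H^1_{loc}(\mathbb R)$ and to continuity of $u$ on all of $\overline{Q^k}$. Extending ``by constants outside $K_1,K_2$'' is only meaningful if $f$ has a finite limit at $\xi=0^{+}$ and $g$ at $\eta=(-1)^{+}$, and this does not follow from interior regularity: $u(x,t)=\log(t+x)$ solves the wave equation and is $H^1$ on every compact subset of the open set $Q^k$, yet $f(\xi)=\log\xi$ blows up at $0$, so $u$ admits no continuous extension to the corner $(0,0)$ and the constant extension of $f$ is not in $H^1_{loc}(\mathbb R)$. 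What rescues the statement is the nonstandard definition of $H^1_{loc}$ fixed in Section 2 (taken from \cite{JPN}): $\psi u\in H^1(Q^k)$ for every cutoff $\psi\in D(\mathbb R^2)$, i.e. local $H^1$ regularity up to the boundary; it is this extra information near $\partial Q^k$ that controls $f$ near $0$ and $g$ near $-1$, and your argument must invoke it explicitly (this boundary analysis is exactly the part the paper delegates to \cite{JPN}). The interior assertions are unaffected: for $t_0>0$ the intervals $[t_0,\,t_0+l_k(t_0)]$ and $[(1-k)t_0-1,\,t_0]$ are compactly contained in $K_1$ and $K_2$, so the traces on the lines $t=t_0$ and on $\Gamma_L$, $\Gamma_R$ come out as you say, and, as the paper's remark notes, the later arguments only ever use $f$ on $(-1,+\infty)$, so nothing downstream depends on the extension to all of $\mathbb R$.
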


Based on the three lemmas above, we have
\begin{theorem}\label{th3}
For any $a\in\mathbb R$ and $(u^0,u^1)\in H^1(0,1)\times L^2(0,1),$ the system \eqref{e1} admits a unique weak solution $u\in H^1_{loc}(Q^k).$ Moreover, there exists $f\in H^1_{loc}(\mathbb R)$ such that
$$
u(x,t)=f(t+x)+f(t-x),\ \mbox{almost everywhere in}\ Q^k.
$$
\end{theorem}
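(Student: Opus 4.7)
The plan is to use the d'Alembert decomposition from Lemma~\ref{l3} to reduce the PDE problem to the construction of a single scalar function $f\in H^1_{loc}(\mathbb R)$ satisfying a linear functional equation, and then to verify that $u(x,t):=f(t+x)+f(t-x)$ is the unique weak solution. First I apply Lemma~\ref{l3} to any candidate weak solution $u\in H^1_{loc}(Q^k)$, producing $\tilde f,\tilde g\in H^1_{loc}(\mathbb R)$ with $u(x,t)=\tilde f(t+x)+\tilde g(t-x)$ almost everywhere in $Q^k$ and with $H^1_{loc}$ traces on $\partial Q^k$ (which is what gives pointwise sense to the boundary conditions). The left Neumann condition $\tilde f'(t)-\tilde g'(t)=u_x(0,t)=0$ for $t>0$ forces $\tilde f-\tilde g$ to be constant on $(0,\infty)$. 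Since every $(x,t)\in Q^k$ satisfies $t-x\geq -1$, I can absorb this constant and replace $(\tilde f,\tilde g)$ by a single $f\in H^1_{loc}(\mathbb R)$ with $u(x,t)=f(t+x)+f(t-x)$.

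Next I read off $f$ on $(-1,1)$ from the initial data: the relations $f(x)+f(-x)=u^0(x)$ and $f'(x)+f'(-x)=u^1(x)$ on $(0,1)$ give
\begin{equation*}
f'(s)=\tfrac{1}{2}\bigl[(u^0)'(s)+u^1(s)\bigr]\ \text{on}\ (0,1),\qquad f'(s)=\tfrac{1}{2}\bigl[(u^0)'(-s)-u^1(-s)\bigr]\ \text{on}\ (-1,0),
\end{equation*}
so $f\in H^1(-1,1)$ is uniquely determined up to a harmless additive constant. Substituting the ansatz into the Robin condition at $x=l_k(t)$ yields
\begin{equation*}
(1+a)\,f'\bigl((1+k)t+1\bigr)+(a-1)\,f'\bigl((1-k)t-1\bigr)=0,\qquad t\geq 0.
\end{equation*}
Writing $s=(1+k)t+1$ and $\alpha=(1-k)/(1+k)\in(0,1)$, this becomes $(1+a)f'(s)+(a-1)f'(\alpha s-\alpha-1)=0$ for $s\geq 1$, and since $\alpha s-\alpha-1\in[-1,s)$ whenever $s\geq 1$, solving for the leading term extends $f'$ inductively from $[-1,1]$ to $[1,1+2/\alpha]$, then to $[1+2/\alpha,\,1+2/\alpha+2/\alpha^2]$, and so on, eventually covering all of $[-1,\infty)$. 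Each step is precomposition with an affine contraction, so local $L^2$ regularity is preserved and $f\in H^1_{loc}([-1,\infty))$; extending arbitrarily on $(-\infty,-1)$ (its values do not appear in $u$) produces $f\in H^1_{loc}(\mathbb R)$.

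To conclude, setting $u(x,t):=f(t+x)+f(t-x)$ with the constructed $f$, the wave equation holds in $D'(Q^k)$ by construction, the initial data are recovered by the formulas of the second paragraph, the left Neumann condition is automatic from the symmetric form of the ansatz, and the right Robin condition holds because it is precisely the functional equation used to define $f$ on $[1,\infty)$. Uniqueness is immediate, since the reduction above is reversible: every weak solution generates the same $f$ on $[-1,\infty)$. The step I expect to require the most care is verifying $H^1_{loc}$ regularity of $f$ across the junction points $s_n=1+2\sum_{j=1}^{n}\alpha^{-j}$ where successive pieces of the inductive construction meet: the recursion enforces continuity of $f$ automatically, but one must check that the one-sided limits of $f'$ agree there (equivalently, that no Dirac masses appear in the distributional derivative), and this is where the boundary compatibility encoded in the functional equation has to be exploited explicitly to rule out jumps propagated from the initial interval through the affine contraction.
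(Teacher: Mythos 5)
Your overall route is the same as the paper's: Lemma \ref{l3} gives $u=\tilde f(t+x)+\tilde g(t-x)$, the Neumann condition at $x=0$ collapses this to a single profile $f$, the initial data determine $f'$ on $(-1,1)$, and the Robin condition at $x=l_k(t)$ becomes the functional equation $(1+a)f'\big((1+k)t+1\big)=(1-a)f'\big((1-k)t-1\big)$, which is propagated forward along the affine expansion map exactly as in the paper. There is, however, a genuine gap: your extension step ``solve for the leading term'' divides by $1+a$ and so breaks down at $a=-1$, a value included in the hypothesis ``for any $a\in\mathbb R$''. At $a=-1$ the functional equation degenerates to $-2f'\big((1-k)t-1\big)=0$ for $t>0$, i.e. $f'\equiv 0$ on $(-1,\infty)$, which contradicts the values of $f'$ on $(-1,1)$ already forced by the initial data unless $(u^0,u^1)$ is trivial; thus for $a=-1$ no inductive extension is possible and existence fails for generic data. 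The paper isolates $a=\pm1$ as separate cases and records precisely this degeneracy; your argument needs at least that case distinction (the case $a=1$ is harmless for you, since the leading coefficient $1+a=2\neq0$ and the recursion simply yields $f'=0$ on $(1,\infty)$, matching the paper's ``constant in the region $V$'' conclusion).

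A smaller point: the step you single out as delicate, matching one-sided limits of $f'$ at the junction points $s_n$, is not actually required for $f\in H^1_{loc}$. Once $f'$ has been extended as an $L^2_{loc}$ function, define $f(s)=f(-1)+\int_{-1}^{s}f'(\sigma)\,d\sigma$; this antiderivative is automatically absolutely continuous, no Dirac masses can occur, and no extra compatibility condition on the data is needed (disagreement of one-sided limits of $f'$ is perfectly compatible with $H^1_{loc}$). What does need to be pinned down is the additive normalization of $f$ and the piecewise constants: the paper fixes $f(0)=\tfrac12 u^0(0)$ and computes the constants $C_n$ on each interval $I_n$ by continuity of $f$. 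Relatedly, your remark that $f$ on $(-1,1)$ is determined ``up to a harmless additive constant'' is imprecise: replacing $f$ by $f+c$ changes $u$ by $2c$ and destroys $u(x,0)=u^0(x)$, so the constant is in fact fixed by $f(x)+f(-x)=u^0(x)$.
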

\begin{proof}The whole proof is divided into five part.

\medskip
{ The first step.} By Lemma \ref{l3}, we have $\displaystyle u(x,t)=f(t+x)+g(t+x)\ a.e.\ \mbox{in}\ Q^k$ with $f,g\in H^1_{loc}(\mathbb R).$

\medskip
{ The second step.} Using the Neumann boundary condition $u_x(0,t)=0,$ we get $\displaystyle f'(t)+g'(t)=0$ and so $\displaystyle g=f+c,$ where $c$ is a constant. Take $\displaystyle f=f+\frac{c}{2},$ then $u(x,t)=f(t+x)+f(t-x).$

\medskip
{ The third step.} Using the moving boundary condition $\displaystyle u_x(l_k(t))+au_t(l_k(t),t)=0,$ we assert that $\displaystyle(a+1)f'(t+l_k(t))=(1-a)f'(t-l_k(t)),\ a.e.\ \mbox{in}\ \mathbb R.$ Set $F=(1+l_k)\circ(1-l_k)^{-1},$ then we have
\begin{equation}\label{Third}
(1+a)f'\circ F=(1-a)f'\ a.e.\ \mbox{in}\ \mathbb R.
\end{equation}

{The fourth step.} Based on the initial data, we derive $u^0(x)=f(x)+f(-x)$ and $u^1(x)=f'(x)+f'(-x).$ Hence
\begin{equation}\label{Fourth}
\begin{split}
f(x)&:=\frac{1}{2}\big[\int^x_0u^1(y)dy+u^0(x)\big],\quad \forall x\in[0,1],\\[2mm]
f(-x)&:=\frac{1}{2}\big[-\int^x_0u^1(y)dy+u^0(x)\big],\quad \forall x\in[0,1].
\end{split}
\end{equation}

{ The fifth step.} We are going to extend $f$ from $I_0=[-1,1)$ to $\mathbb R.$ It is clear that $F$ is invertible in $\mathbb R.$ Let $I_n=F^n(I_0)=\big[F^n(-1),F^n(1)\big),$ $n\in\mathbb Z.$ We have $I_n\neq I_m,\ n\neq m$ and $\bigcup\limits_{n\in\mathbb Z}I_n=\mathbb R.$ Since $H^1_{loc}(\mathbb R)$ can be imbedded in $C(\mathbb R),$ by \eqref{Fourth} and the continuity of $f,$ we can extend uniquely $f$ from $I_0$ to $\mathbb R.$ $f$ is unique, Thus $u$ is unique.

\medskip
Case 1. $a=-1.$ From \eqref{Third} we know $f'=0,\ a.e.\ \mbox{in}\ \mathbb R$ and consequently $f=c,\ u=2c,$ for any $c\in\mathbb R.$ Hence $u^0=2c$ and $u^1=0.$ This shows that \eqref{e1} has only a trivial solution at $a=-1.$

\medskip
Case 2. $a=1.$ We see that $f'\circ F=0,\ a.e.\ \mbox{in}\ \mathbb R.$ It follows that $f'\circ F^n=0,\ a.e.\ \mbox{in}\ \mathbb R,\ n\neq0.$ Consequently, solutions of \eqref{e1} appear to be constant in the domain $V:$ $\begin{cases}t+x\geq1\\ t-x\geq1\end{cases}$ or $\begin{cases}t+x\leq-1\\ t-x\leq-1\end{cases}$(see figure \ref{fig1}).
\begin{figure}[htp]
\begin{center}
\includegraphics[width=3in]{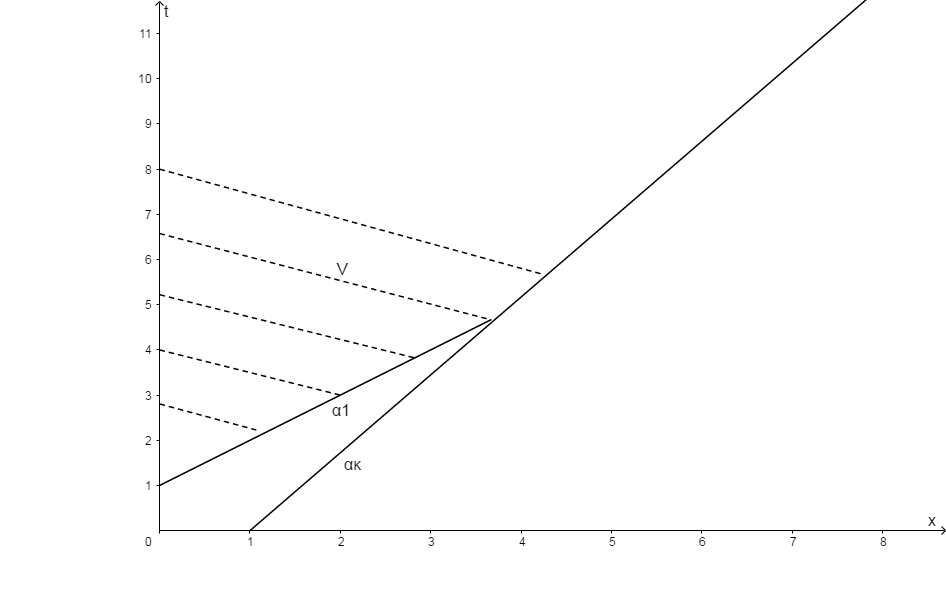}\\
\caption{The graph of case 2}\label{fig1}
\end{center}
\end{figure}

Case 3. $a\neq\pm1.$ From \eqref{Third}, we can derive that
\begin{equation}\label{F2}
f'=\big(\frac{1-a}{1+a}\big)^nf'\circ F^{-n},\ n\in\mathbb Z.
\end{equation}
By \eqref{Fourth}, $f'$ has been known $a.e.$ on $I_0.$ Using the relationship \eqref{F2}, we can obtain $f'$ $a.e.$ on $I_n.$ Furthermore, by integral of $f'$ over $I_n,$ up to a constant $C_n$ (depending on $n$), $f$ is also known on every $I_n$ and $C_0=f(0)=\frac{1}{2}u^0(0).$ Thus, by continuity of $f,$ $f$ is unique in $C(\mathbb R).$ Since $u(x,t)=f(t+x)+f(t-x),\ a.e.\ \mbox{in}\ Q^k,$ $u$ is uniquely determined by initial-boundary condition in $H^1_{loc}(Q^k).$ Finally, we only need to determine $C_n$ for $n\geq1,$ as the argument of $C_n,\ n\leq-1$ is analogous to that.

Due to $I_n:=\big[F^n(-1),F^n(1)\big)=F^n(I_0),$ for any $y\in I_n,$ there exists $x\in I_0,$ such that $F^n(x)=y.$
For $y\in I_1,$ we have $\displaystyle f'(y)=\big(\frac{1-a}{1+a}\big)f'\circ F^{-1}(y)$ by \eqref{F2} and
\begin{equation*}
\begin{split}
f(y)&=\int^y_{F(0)}f'(z)dz+C_1\\[2mm]
&=\big(\frac{1-a}{1+a}\big)\int^y_{F(0)}f'\circ F^{-1}(z)dz+C_1\\[2mm]
&=\big(\frac{1-a}{1+a}\big)\int^y_{F(0)}\frac{1}{\big(F^{-1}(z)\big)'}f'\circ F^{-1}(z)dF^{-1}(z)+C_1.
\end{split}
\end{equation*}
Since $\displaystyle F^{-1}(z)=\frac{(1-k)z-2}{1+k},$ it follows that
\begin{equation*}
f(y)=\big(\frac{1-a}{1+a}\big)\big(\frac{1+k}{1-k}\big)\Big[f\circ F^{-1}(y)-\frac{1}{2}u^0(0)\Big]+C_1,
\end{equation*}
where $F^{-1}(y)\in I_0.$ $f$ has been known on $I_0$ by \eqref{Fourth}. Hence, for any $y\in\big[1,F(0)\big),$ we deduce
\begin{equation}\label{f(y)}
f(y)=\big(\frac{1-a}{1+a}\big)\big(\frac{1+k}{1-k}\big)\Big\{\frac{1}{2}\big[u^0\big(\frac{(k-1)y+2}{1+k}\big)-\int^{\frac{(k-1)y+2}{1+k}}_0\!\!\!u^1(z)dz\big]-\frac{1}{2}u^0(0)\Big\}+C_1.
\end{equation}
From \eqref{Fourth}, we get $\displaystyle f(1)=\frac{1}{2}\big[\int^1_0u^1(z)dz+u^0(1)\big].$ Let $y\rightarrow1$ in \eqref{f(y)}. By continuity of $f,$ it holds that
\begin{equation*}
\big(\frac{1-a}{1+a}\big)\big(\frac{1+k}{1-k}\big)\Big\{\frac{1}{2}\big[u^0\big(1\big)-\int^1_0\!\!\!u^1(z)dz\big]-\frac{1}{2}u^0(0)\Big\}+C_1=\frac{1}{2}\big[\int^1_0u^1(z)dz+u^0(1)\big].
\end{equation*}
This yields that
\begin{equation*}
C_1=\frac{1}{2}\Big[1-\big(\frac{1-a}{1+a}\big)\big(\frac{1+k}{1-k}\big)\Big]u^0(1)+\frac{1}{2}\Big[1+\big(\frac{1-a}{1+a}\big)\big(\frac{1+k}{1-k}\big)\Big]\int^1_0u^1(z)dz+\frac{1}{2}\big(\frac{1-a}{1+a}\big)\big(\frac{1+k}{1-k}\big)u^0(0).
\end{equation*}
For any $y\in I_n,$ we use \eqref{F2} to deduce that
\begin{equation*}
\begin{split}
f(y)&=\int^y_{F^n(0)}f'(z)dz+C_n\\[2mm]
&=\big(\frac{1-a}{1+a}\big)^n\int^y_{F^n(0)}f'\circ F^{-n}(z)dz+C_n\\[2mm]
&=\big(\frac{1-a}{1+a}\big)^n\int^y_{F^n(0)}\frac{1}{\big(F^{-n}(z)\big)'}f'\circ F^{-n}(z)dF^{-n}(z)+C_n.
\end{split}
\end{equation*}
Since $\displaystyle F^{-1}(z)=\frac{(1-k)z-2}{1+k},$ one can get $\displaystyle \big(F^{-n}(z)\big)'=\big(\frac{1-k}{1+k}\big)^n.$ Thus
\begin{equation*}
\begin{split}
f(y)&=\big(\frac{1-a}{1+a}\big)^n\big(\frac{1+k}{1-k}\big)^n\int^y_{F^n(0)}f'\circ F^{-n}(z)dF^{-n}(z)+C_n\\[2mm]
&=\big(\frac{1-a}{1+a}\big)^n\big(\frac{1+k}{1-k}\big)^n\big[f\circ F^{-n}(y)-\frac{1}{2}u^0(0)\big]+C_n.
\end{split}
\end{equation*}
Notice that $F^{-n}(y)\in I_0,$ then for any $y\in\big[F^{n}(-1),F^{n}(0)\big),$
\begin{equation*}
f(y)=\big(\frac{1-a}{1+a}\big)^n\big(\frac{1+k}{1-k}\big)^n\Big\{\frac{1}{2}\big[u^0\big(\!-F^{-n}(y)\big)-\int^{-F^{-n}(y)}_0\!\!\!u^1(z)dz\big]-\frac{1}{2}u^0(0)\Big\}+C_n.
\end{equation*}
If $y\rightarrow F^n(-1),$ then $-F^{-n}(y)\rightarrow1$ and
$$
f(y)\rightarrow\big(\frac{1-a}{1+a}\big)^n\big(\frac{1+k}{1-k}\big)^n\Big\{\frac{1}{2}\big[u^0\big(1\big)-\int^{1}_0\!\!\!u^1(z)dz\big]-\frac{1}{2}u^0(0)\Big\}+C_n.
$$
Similarly, for any $y\in\big[F^{n-1}(0),F^{n-1}(1)\big),$ we have
\begin{equation*}
f(y)=\big(\frac{1-a}{1+a}\big)^{n-1}\big(\frac{1+k}{1-k}\big)^{n-1}\Big\{\frac{1}{2}\big[u^0\big(\!F^{-(n-1)}(y)\big)+\int^{F^{-(n-1)}(y)}_0\!\!\!u^1(z)dz\big]-\frac{1}{2}u^0(0)\Big\}+C_{n-1}.
\end{equation*}
If $y\rightarrow F^{n-1}(1),$ then $F^{-(n-1)}(y)\rightarrow1$ and
$$
f(y)\rightarrow\big(\frac{1-a}{1+a}\big)^{n-1}\big(\frac{1+k}{1-k}\big)^{n-1}\Big\{\frac{1}{2}\big[u^0\big(1\big)+\int^{1}_0\!\!\!u^1(z)dz\big]-\frac{1}{2}u^0(0)\Big\}+C_{n-1}.
$$
By continuity of $f,$ we conclude
\begin{equation*}
\begin{split}
&\big(\frac{1-a}{1+a}\big)^{n-1}\big(\frac{1+k}{1-k}\big)^{n-1}\Big\{\frac{1}{2}\big[u^0\big(1\big)+\int^{1}_0\!\!\!u^1(z)dz\big]-\frac{1}{2}u^0(0)\Big\}+C_{n-1}\\[2mm]
=&\big(\frac{1-a}{1+a}\big)^n\big(\frac{1+k}{1-k}\big)^n\Big\{\frac{1}{2}\big[u^0\big(1\big)-\int^{1}_0\!\!\!u^1(z)dz\big]-\frac{1}{2}u^0(0)\Big\}+C_n,
\end{split}\end{equation*}
which leads to
\begin{equation*}
\begin{split}
C_n=&\frac{1}{2}\big(\frac{1-a}{1+a}\big)^{n-1}\big(\frac{1+k}{1-k}\big)^{n-1}\Bigg\{\Big[1-\big(\frac{1-a}{1+a}\big)\big(\frac{1+k}{1-k}\big)\Big]u^0(1)\\[2mm]
&+\Big[1+\big(\frac{1-a}{1+a}\big)\big(\frac{1+k}{1-k}\big)\Big]\int^1_0u^1(z)dz+\Big[-1+\big(\frac{1-a}{1+a}\big)\big(\frac{1+k}{1-k}\big)\Big]u^0(0)\Bigg\}+C_{n-1}.
\end{split}\end{equation*}
Until now, we complete the proof of Theorem \ref{th3}.
\end{proof}

\begin{remark}
Given $(x,t)\in Q^k,$ put $\xi=t+x$ and $\eta=t-x.$ An easy computation shows that $\xi\in K_1=(0,+\infty)$ and $\eta\in K_2=(-1,+\infty).$ Although we extend $f$ from $I_0$ to $\mathbb R$ using boundary conditions, to prove the existence of solutions in $Q^k,$ it is sufficient to determine a unique $f$ from $I_0$ to $(-1,+\infty).$
\end{remark}

\section{The proof of Theorem \ref{th1}}

Without loss of generality, we assume that functions are sufficiently smooth. Otherwise, we can use the smoothing technique.

\begin{proof}[ Proof of Theorem \ref{th1}] The proof falls naturally into two parts.

1. The stabilization of \eqref{e1}.
Step 1. We study the relationship between the energy of \eqref{e1} and $a.$

As
$$
E_1(t)=\frac{1}{2}\int^{l_k(t)}_{0}\big[u^2_t(x,t)+u^2_x(x,t)\big]dx,
$$
calculating the derivative of $E_1(\cdot)$ with respect to $t,$ we have
\begin{equation*}
\begin{split}
E_1'(t)&=\int^{l_k(t)}_{0}\big[u_t(x,t)u_{tt}(x,t)+u_x(x,t)u_{xt}(x,t)\big]dx\\[2mm]
&\quad+\frac{l'_k(t)}{2}\big[u^2_t(l_k(t),t)+u_x^2(l_k(t),t)\big].
\end{split}
\end{equation*}
Using the first equation and boundary conditions in \eqref{e1}, we arrive at
\begin{eqnarray}
\label{r-1}
\begin{array}{rl}
E_1'(t)\!\!\!&=\displaystyle\int^{l_k(t)}_{0}\big[u_t(x,t)u_x(x,t)\big]_xdx+\frac{l'_k(t)}{2}\big[u_t^2(l_k(t),t)+u_x^2(l_k(t),t)\big]\\[3mm]
&=\displaystyle u_t(l_k(t),t)u_x(l_k(t),t)+\frac{l'_k(t)}{2}\big[u_t^2(l_k(t),t)+u_x^2(l_k(t),t)\big]\\[3mm]
&=\displaystyle u_t(l_k(t),t)\big[-au_t(l_k(t),t)\big]+\frac{k}{2}\big[u_t^2(l_k(t),t)+a^2u_t^2(l_k(t),t)\big]\\[3mm]
&=\displaystyle\frac{ka^2-2a+k}{2}u_t^2(l_k(t),t).
\end{array}
\end{eqnarray}
Let $\displaystyle f(a)=ka^2-2a+k,$ then \eqref{r-1} can be written as
\begin{equation}\label{r0}
E_1'(t)=\frac{f(a)}{2}u_t^2(l_k(t),t).
\end{equation}
It is easy to check that the discriminant of $f(a):$
$$\Delta=4(1-k^2)>0, \ (0<k<1).$$
Hence, two roots are
$$a_1=\frac{1-\sqrt{1-k^2}}{k}\qquad \mbox{and}\qquad a_2=\frac{1+\sqrt{1-k^2}}{k}.$$
When $a<a_1\ \mbox{or}\ a>a_2,$ we conclude that $E_1'(t)>0$ and the energy of \eqref{e1} is increasing; When $ a=a_1\ \mbox{or}\ a=a_2,$ $E_1'(t)=0$ and the energy of \eqref{e1} is conserved; When $a_1<a<a_2,$ $E_1'(t)<0$ and the energy of \eqref{e1} is decreasing.

Step 2. Integrating \eqref{r0} over $(0,T),$ we get
\begin{equation}\label{r1}
E_1(T)-E_1(0)=\frac{f(a)}{2}\int_0^Tu_t^2(l_k(t),t)dt.
\end{equation}
Multiplying the first equation in \eqref{e1} by $xu_x,$ we obtain
\begin{eqnarray*}
\begin{split}
0=(u_{tt}-u_{xx})xu_x=(xu_tu_x)_t-(\frac{1}{2}xu_t^2+\frac{1}{2}xu_x^2)_x+\frac{1}{2}u_t^2+\frac{1}{2}u_x^2.
\end{split}
\end{eqnarray*}
Integrating the above equality on $Q^k_T$ and using the Green's formula, we have
\begin{equation*}
\begin{split}
0&=\int^{l_k(T)}_{0}xu_t(x,T)u_x(x,T)dx-\int^1_{0}xu_t(x,0)u_x(x,0)dx\\[2mm]
&\quad+\int_{\Gamma_{R}}\big[xu_tu_xn_t-\frac{1}{2}x(u^2_t+u^2_x)n_x\big]d\sigma+\int^{T}_{0}E_1(t)dt,
\end{split}
\end{equation*}
where $d\sigma$ is the length element on $\Gamma_{R}$ and $n_t,n_x$ are components of the unit exterior normal $n$ on $\Gamma_R$ corresponding to time and space respectively. It is easy to see that $\displaystyle n=(n_x,n_t)^\top=(\frac{1}{\sqrt{1+k^2}},\frac{-k}{\sqrt{1+k^2}})^\top.$

Notice that $x=l_k(t)$ on $\Gamma_R.$ Transforming the curvilinear integral on $\Gamma_R$ into a single integral about $t,$ using the moving boundary condition  and rearranging the above equality, one gets
\begin{equation}\label{g}
\begin{split}
\int^{T}_0\!\!E_1(t)dt&\!=\!-\int^{l_k(T)}_0xu_t(x,T)u_x(x,T)dx+\int^1_0xu_t(x,0)u_x(x,0)dx\\[1mm]
&\quad+\int_0^T\Big\{l_k(t)u_t(l_k(t),t)u_x(l_k(t),t)k+\frac{l_k(t)}{2}\big[u^2_t(l_k(t),t)+u^2_x(l_k(t),t)\big]\Big\}dt\\[1mm]
&\!=\!-\int^{l_k(T)}_0xu_t(x,T)u_x(x,T)dx+\int^1_0xu_t(x,0)u_x(x,0)dx\\[1mm]
&\quad+\int_0^Tl_k(t)\big[\frac{1}{2}(1+a^2)-ak\big]u^2_t(l_k(t),t)dt.
\end{split}
\end{equation}
Let $\displaystyle g(a)=a^2-2ka+1,$ the discriminant of $g(a)$ is $\displaystyle\Delta=4(k^2-1)<0.$ Thus $\displaystyle g(a)>0,\ \forall a\in\mathbb R.$

Write \eqref{g} as
\begin{equation}\label{r2}
\begin{split}
\int^{T}_0E_1(t)dt&=-\int^{l_k(T)}_0xu_t(x,T)u_x(x,T)dx+\int^1_0xu_t(x,0)u_x(x,0)dx\\[1mm]
&\quad+\int_0^T\frac{g(a)}{2}l_k(t)u^2_t(l_k(t),t)dt.
\end{split}
\end{equation}
On the other hand, multiplying the first equation in \eqref{e1} by $(T-t)u_t,$ one has
\begin{equation*}
\begin{split}
0&=(u_{tt}-u_{xx})(T-t)u_t\\[1mm]
&=\big[\frac{1}{2}(T-t)u_t^2\big]_t+\frac{1}{2}u_t^2-\big[(T-t)u_xu_t\big]_x+(T-t)u_xu_{tx}\\[1mm]
&=\big[\frac{1}{2}(T-t)u_t^2\big]_t+\frac{1}{2}u_t^2-\big[(T-t)u_xu_t\big]_x+\big[\frac{1}{2}(T-t)u_x^2\big]_t+\frac{1}{2}u_x^2\\[1mm]
&=\big[\frac{1}{2}(T-t)u_t^2+\frac{1}{2}(T-t)u_x^2\big]_t-\big[(T-t)u_xu_t\big]_x+\frac{1}{2}u_t^2+\frac{1}{2}u_x^2.
\end{split}
\end{equation*}
Integrating the above equality on $Q^k_T,$ we obtain
\begin{equation}\label{r3}
\begin{split}
-\int_0^T\!\!E_1(t)dt&=\!-\int^{1}_{0}\frac{T}{2}\big[u_t^2(x,0)+u_x^2(x,0)\big]dx\\[1mm]
&\quad+\int_{\Gamma_{R}}\big[\frac{(T-t)}{2}(u_t^2+u_x^2)n_t-(T-t)u_xu_tn_x\big]d\sigma\\[1mm]
&=\!\int_0^T\frac{(T-t)}{2}\Big\{\big[u_t^2(l_k(t),t)+u_x^2(l_k(t),t)\big](-k)-2u_x(l_k(t),t)u_{t}(l_k(t),t)\Big\}dt\\[1mm]
&\quad-TE_1(0)\\[1mm]
&=\!\int_0^T\frac{(T-t)}{2}\big[(a^2+1)(-k)+2a\big]u_t^2(l_k(t),t)dt-TE_1(0)\\[1mm]
&=\!-\int_0^T\frac{f(a)(T-t)}{2}u_t^2(l_k(t),t)dt-TE_1(0).
\end{split}
\end{equation}
Then \eqref{r2} and \eqref{r3} yield that
\begin{equation}\label{r4}
\begin{split}
0
&=\int_0^T\frac{\big[g(a)l_k(t)-f(a)(T-t)\big]}{2}u_t^2(l_k(t),t)dt\\[1mm]
&\quad-\int^{l_k(t)}_0xu_t(x,t)u_x(x,t)dx\bigg|^T_0-TE_1(0)\\[1mm]
&=\int_0^T\Big\{\frac{g(a)-f(a)T}{2}+\frac{\big[kg(a)+f(a)\big]t}{2}\Big\}u_t^2(l_k(t),t)dt\\[1mm]
&\quad-\int^{l_k(t)}_0xu_t(x,t)u_x(x,t)dx\bigg|^T_0-TE_1(0).
\end{split}\end{equation}
Set
\begin{equation*}\begin{split}
h(a)=kg(a)+f(a)&=k(a^2-2ka+1)+ka^2-2a+k\\[1mm]
&=2\big[ka^2-(k^2+1)a+k\big].
\end{split}
\end{equation*}
The discriminant of $h(a)$ is
$$
\Delta=4\big[(k^2+1)^2-4k^2\big]=4(k^2-1)^2>0,\quad (k<1).
$$
Hence $h(a)$ has two roots
$$
b_1=(k^2+1-\sqrt{\Delta})/2k=k,\qquad b_2=(k^2+1+\sqrt{\Delta})/2k=\displaystyle\frac{1}{k}.
$$
It is easy to check that
$$
a_1=\frac{1-\sqrt{1-k^2}}{k}<k<\frac{1}{k}<a_2=\frac{1+\sqrt{1-k^2}}{k}.
$$
{\bf Case 1.} $\displaystyle a=k$ or $a=\displaystyle\frac{1}{k}.$ We have
$$
h(a)= kg(a)+f(a)=0.
$$
Thus
$$
\frac{g(a)}{f(a)}=-\frac{1}{k}.
$$
From \eqref{r4}, we get
\begin{equation}\label{r5}
TE_1(0)+\int^{l_k(t)}_0xu_t(x,t)u_x(x,t)dx\bigg|^T_0=\int_0^T\frac{g(a)-f(a)T}{2}u_t^2(l_k(t),t)dt.
\end{equation}
Substituting \eqref{r1} into \eqref{r5}, we obtain
\begin{equation}\label{r6}
\begin{split}
TE_1(0)+\int^{l_k(t)}_0xu_t(x,t)u_x(x,t)dx\bigg|^T_0
&=\frac{g(a)-f(a)T}{f(a)}\big[E_1(T)-E_1(0)\big]\\[1mm]
&=(-\frac{1}{k}-T\big)[E_1(T)-E_1(0)\big].
\end{split}
\end{equation}
Rearranging the above equality, we arrive at
\begin{equation}\label{kg(a)+f(a)=0}
(1+kT)E_1(T)+k\int^{l_k(T)}_0xu_t(x,T)u_x(x,T)dx=E_1(0)+k\int^{1}_0xu_t(x,0)u_x(x,0)dx.
\end{equation}
Notice that
\begin{equation}\label{inequalityl(t)}
-l_k(t)E_1(t)\leq\int^{l_k(t)}_0xu_t(x,t)u_x(x,t)dx\leq l_k(t)E_1(t),\quad \forall t\geq0,
\end{equation}
which together with \eqref{kg(a)+f(a)=0} yields
$$
(1+kT)E_1(T)-kl_k(T)E_1(T)\leq E_1(0)+kE_1(0),
$$
and
$$
(1+kT)E_1(T)+kl_k(T)E_1(T)\geq E_1(0)-kE_1(0).
$$
As $\displaystyle l_k(t)=1+kt,$ we deduce
$$
(1-k)(1+kT)E_1(T)\leq (1+k)E_1(0),
$$
and
$$
(1+k)(1+kT)E_1(T)\geq (1-k)E_1(0).
$$
Therefore,
\begin{equation}\label{r7}
\frac{(1-k)}{(1+k)(1+kT)}E_1(0)\leq E_1(T)\leq\frac{(1+k)}{(1-k)(1+kT)}E_1(0).
\end{equation}
According to \eqref{r7}, we know that when $\displaystyle a=k$ or $a=\displaystyle\frac{1}{k},$  system \eqref{e1} decays at a rate of first-order polynomials.

{\bf Case 2.} $\displaystyle b_1=k<a<\displaystyle b_2=\frac{1}{k}.$ Now, we have
\begin{equation}\label{case2}
h(a)=kg(a)+f(a)<0.
\end{equation}
By \eqref{case2}, \eqref{r4} leads to
\begin{equation}\label{r8}\begin{split}
TE_1(0)+\int^{l_k(t)}_0xu_t(x,t)u_x(x,t)dx\bigg|^T_0
\leq&\int_0^T\frac{g(a)-f(a)T}{2}u_t^2(l_k(t),t)dt\\[2mm]
=&(\frac{g(a)}{f(a)}-T)\frac{f(a)}{2}\int_0^Tu_t^2(l_k(t),t)dt.
\end{split}\end{equation}
Substituting \eqref{r1} into \eqref{r8}, one gets
\begin{equation*}
\begin{split}
TE_1(0)+\int^{l_k(t)}_0xu_t(x,t)u_x(x,t)dx\bigg|^T_0
\leq\big(\frac{g(a)}{f(a)}-T\big)\big[E_1(T)-E_1(0)\big],\\
\end{split}\end{equation*}
which implies that
\begin{equation*}\begin{split}
&\big(\frac{g(a)}{-f(a)}+T\big)E_1(T)+\int^{l_k(T)}_0xu_t(x,T)u_x(x,T)dx\\[2mm]
\leq&\frac{g(a)}{-f(a)}E_1(0)+\int^{1}_0xu_t(x,0)u_x(x,0)dx.
\end{split}
\end{equation*}
By \eqref{inequalityl(t)}, we derive
\begin{equation*}
\begin{split}
\big[\frac{g(a)}{-f(a)}+T-l_k(T)\big]E_1(T)\leq\big(\frac{g(a)}{-f(a)}+1\big)E_1(0).
\end{split}
\end{equation*}
Notice that $f(a)<0,$ $\forall a\in(b_1,b_2)\subset(a_1,a_2)$ and $l_k(t)=1+kt.$ Finally,
\begin{equation*}
\begin{split}
E_1(T)
\leq\frac{\big[\displaystyle\frac{g(a)}{-f(a)}+1\big]}{\big[\displaystyle\frac{g(a)}{-f(a)}-1+(1-k)T\big]}E_1(0).
\end{split}
\end{equation*}
This means that when $\displaystyle k<a<\displaystyle\frac{1}{k},$ system \eqref{e1} decays at a rate which is no less than first-order polynomials.

{\bf Case 3.} $$a_1=\displaystyle\frac{1-\sqrt{1-k^2}}{k}<a<b_1=k,$$
or
$$b_2=\displaystyle\frac{1}{k}<a<a_2=\frac{1+\sqrt{1-k^2}}{k}.$$
Here, we have
\begin{equation}\label{case3}
h(a)=kg(a)+f(a)>0.
\end{equation}
Applying \eqref{case3} to \eqref{r4} and using \eqref{r1} again, we obtain
\begin{equation*}
\begin{split}
TE_1(0)+\int^{l_k(t)}_0xu_t(x,t)u_x(x,t)dx\bigg|^T_0
\geq&\int_0^T\frac{g(a)-f(a)T}{2}u_t^2(l_k(t),t)dt\\[2mm]
=&(\frac{g(a)}{f(a)}-T)\big[E_1(T)-E_1(0)\big],\\
\end{split}
\end{equation*}
which gives
\begin{equation*}
\begin{split}
&(\frac{g(a)}{-f(a)}+T)E_1(T)+\int^{l_k(T)}_0xu_t(x,T)u_x(x,T)dx\\[2mm]
\geq&\frac{g(a)}{-f(a)}E_1(0)+\int^{1}_0xu_t(x,0)u_x(x,0)dx.
\end{split}
\end{equation*}
By \eqref{inequalityl(t)}, we deduce
\begin{equation*}
\begin{split}
\big[\frac{g(a)}{-f(a)}+T+l_k(T)\big]E_1(T)
\geq(\frac{g(a)}{-f(a)}-1)E_1(0).
\end{split}
\end{equation*}
Since $f(a)<0,$ $\forall a\in(a_1,a_2),$ \eqref{case3} yields $\displaystyle\frac{g(a)}{-f(a)}>\frac{1}{k}>1\ (0<k<1).$ It holds that
\begin{equation*}
\begin{split}
E_1(T)&\geq\frac{\displaystyle(\frac{g(a)}{-f(a)}-1)}{\big[\displaystyle\frac{g(a)}{-f(a)}+1+(1+k)T\big]}E_1(0).
\end{split}
\end{equation*}
This means that when $\displaystyle a_1<a<b_1$ or $\displaystyle b_2<a<a_2,$ system \eqref{e1} decays at a rate which is no more than first-order polynomials.

\medskip
{2. Examples.}
Let's go further to interpret the rate at which the energy of \eqref{e1} decays or grows with some examples. Suppose that \eqref{e1} has a solution in the following form
\begin{equation}\label{sp}
u(x,t)=f(t+\frac{1}{k}+x)+f(t+\frac{1}{k}-x)\quad \mbox{in}\ Q^k.
\end{equation}
Using the moving boundary condition, we get
\begin{equation}\label{spe}
(1+a)f'\big[(1+k)(t+\frac{1}{k})\big]=(1-a)f'\big[(1-k)(t+\frac{1}{k})\big].
\end{equation}
For $a\in\mathbb R$ and $\displaystyle t'=t+\frac{1}{k},$  we have discussed a formula as \eqref{spe} for the wellposedness of \eqref{e1} in Section 2. When $a=1,$ with a similar argument, we claim that the solution will be a constant in the domain $V.$ For $-1<a<1,$ we are going to construct some particular solutions by \eqref{spe}. For simplicity of presentation, set $\displaystyle\mu_a=\frac{1-a}{1+a}$ and $\displaystyle\theta_k=\frac{1+k}{1-k}.$ In addition, set $\displaystyle z=(1-k)(t+\frac{1}{k}),$ obviously, $\displaystyle z>\frac{1}{k}-1,$ $\forall t>0.$ Then \eqref{spe} is converted to
\begin{equation}\label{f'}
f'(\theta_kz)=\mu_af'(z).
\end{equation}
We establish a special function $f'(z)=\displaystyle z^{\displaystyle\frac{\ln\mu_a}{\ln\theta_k}}$ satisfying \eqref{f'}.
\medskip

{\bf Example 1.} If $\displaystyle a=k,$ then $\displaystyle\frac{\ln\mu_a}{\ln\theta_k}=-1$ and $\displaystyle f'(z)=\frac{1}{z}.$ Thus,
$$
f(z)=\ln z+c,
$$
where $c$ is a constant.

By \eqref{sp}, we have
\begin{equation*}\begin{split}
u(x,t)
&=\ln(t+\frac{1}{k}+x)+\ln(t+\frac{1}{k}-x)+2c,\quad t>0,\ 0<x<1+kt.
\end{split}\end{equation*}
Moreover,
\begin{equation*}\begin{split}
u_t(x,t)=\frac{1}{t+\dfrac{1}{k}+x}+\frac{1}{t+\dfrac{1}{k}-x},\quad
u_x(x,t)=\frac{1}{t+\dfrac{1}{k}+x}-\frac{1}{t+\dfrac{1}{k}-x}.
\end{split}\end{equation*}
Put $u^0(x)=u(x,0),$ $u^1(x)=u_t(x,0)$ and then $u$ is a solution to system \eqref{e1} with the initial data $(u^0,u^1).$

Therefore,
\begin{equation*}
\begin{split}
E_1(t)&=\frac{1}{2}\int^{1+kt}_0\big[u_t^2(x,t)+u_x^2(x,t)\big]dx\\[2mm]
&=\int^{1+kt}_0\bigg[\frac{1}{(t+\dfrac{1}{k}+x)^2}+\frac{1}{(t+\dfrac{1}{k}-x)^2}\bigg]dx\\[2mm]
&=\frac{k}{(1+kt)}\big(\frac{1}{1-k}-\frac{1}{1+k}\big),
\end{split}
\end{equation*}
which implies that the energy decays at a rate of first-order polynomials.

{\bf Example 2.} If $\displaystyle a\neq\frac{1}{k},$ then
$$
f(z)=\frac{1}{\displaystyle\frac{\ln\mu_a}{\ln\theta_k}+1}z^{\big(\displaystyle\frac{\ln\mu_a}{\ln\theta_k}+1\big)}+c,
$$
where $c$ is a constant.

By \eqref{sp}, we obtain
\begin{equation*}
\begin{split}
u(x,t)
&=\frac{1}{\displaystyle\frac{\ln\mu_a}{\ln\theta_k}+1}\bigg[\big(t+\frac{1}{k}+x\big)^{\big(\displaystyle\frac{\ln\mu_a}{\ln\theta_k}+1\big)}+\big(t+\frac{1}{k}-x\big)^{\big(\displaystyle\frac{\ln\mu_a}{\ln\theta_k}+1\big)}\bigg]+2c.
\end{split}
\end{equation*}
Moreover,
\begin{equation*}\begin{split}
u_t(x,t)&=\big(t+\frac{1}{k}+x\big)^{\displaystyle\frac{\ln\mu_a}{\ln\theta_k}}+\big(t+\frac{1}{k}-x\big)^{\displaystyle\frac{\ln\mu_a}{\ln\theta_k}},\\[2mm]
u_x(x,t)&=\big(t+\frac{1}{k}+x\big)^{\displaystyle\frac{\ln\mu_a}{\ln\theta_k}}-\big(t+\frac{1}{k}-x\big)^{\displaystyle\frac{\ln\mu_a}{\ln\theta_k}}.
\end{split}\end{equation*}
Given $u^0(x)=u(x,0)$ and $u^1(x)=u_t(x,0),$ $u$ is a solution to system \eqref{e1}.

Therefore,
\begin{equation*}\begin{split}
E_1(t)&=\frac{1}{2}\int^{1+kt}_0\big[u^2_t(x,t)+u^2_x(x,t)\big]dx\\[2mm]
&=\int^{1+kt}_0\bigg[\big(t+\frac{1}{k}+x\big)^{\displaystyle\frac{2\ln\mu_a}{\ln\theta_k}}+\big(t+\frac{1}{k}-x\big)^{\displaystyle\frac{2\ln\mu_a}{\ln\theta_k}}\bigg]dx.
\end{split}\end{equation*}
Further, if $\displaystyle\frac{\ln\mu_a}{\ln\theta_k}=-\frac{1}{2},$ it is easy to check $\displaystyle a=\frac{1-\sqrt{1-k^2}}{k}$ and
\begin{equation*}\begin{split}
E_1(t)&=\int^{1+kt}_0\bigg[\frac{1}{t+\dfrac{1}{k}+x}+\frac{1}{t+\dfrac{1}{k}-x}\bigg]dx\\[2mm]
&=\ln\big(\frac{1+k}{1-k}\big),
\end{split}\end{equation*}
which means that the energy is conserved.

{\bf Example 3.} If $\displaystyle a\neq k$ and $\displaystyle a\neq\frac{1-\sqrt{1-k^2}}{k},$ then
\begin{equation*}\begin{split}
E_1(t)&=\frac{1}{\displaystyle(\frac{2\ln\mu_a}{\ln\theta_k}+1)}\Bigg[\big(t+\frac{1}{k}+x\big)^{\big({\displaystyle\frac{2\ln\mu_a}{\ln\theta_k}}+\displaystyle1\big)}\Bigg|^{1+kt}_0\!\!\!\!-\big(t+\frac{1}{k}-x\big)^{\big({\displaystyle\frac{2\ln\mu_a}{\ln\theta_k}}+\displaystyle1\big)}\Bigg|^{1+kt}_0\Bigg]\\[3mm]
&=\frac{1}{\displaystyle(\frac{2\ln\mu_a}{\ln\theta_k}+\!1)}\Bigg[\big(1+k\big)^{\big({\displaystyle\frac{2\ln\mu_a}{\ln\theta_k}}+\!\displaystyle1\big)}\!\!\!\!-\big(1\!-k\big)^{\big({\displaystyle\frac{2\ln\mu_a}{\ln\theta_k}}+\!\displaystyle1\big)}\Bigg]\big(t+\frac{1}{k}\big)^{\big({\displaystyle\frac{2\ln\mu_a}{\ln\theta_k}}+\!\displaystyle1\big)}.
\end{split}\end{equation*}
For any fixed $k,$ let $\displaystyle g_k(a)=\frac{2\ln\mu_a}{\ln\theta_k}+1=\frac{2\ln\big(\displaystyle\frac{1-a}{1+a}\big)}{\ln\big(\displaystyle\frac{1+k}{1-k}\big)}+1$ denote a function with respect to $a.$ It is easy to find that $g_k(\cdot)$ is a strictly decreasing function for $\displaystyle -1<a<1.$ Moreover,

$(1)$ if $\displaystyle-1<a<\frac{1-\sqrt{1-k^2}}{k},$  $\displaystyle g_k(a)\rightarrow+\infty$ $(a\rightarrow-1)$ and $g_k(a)\rightarrow0$ $(\displaystyle a\rightarrow\frac{1-\sqrt{1-k^2}}{k});$
\medskip

$(2)$ if $\displaystyle\frac{1-\sqrt{1-k^2}}{k}<a<k,$ $g_k(a)\rightarrow0$ $(a\rightarrow\displaystyle\frac{1-\sqrt{1-k^2}}{k})$ and $g_k(a)\rightarrow-1$ $(\displaystyle a\rightarrow k);$
\medskip

$(3)$ if $\displaystyle k<a<1,$ $g_k(a)\rightarrow-1$ $(a\rightarrow k)$ and $g_k(a)\rightarrow-\infty$ $(a\rightarrow1).$

According to the above argument, we conclude that the energy of \eqref{e1} decays or grows only at a polynomial rate.

With regard to $\displaystyle a<-1$ or $a>1,$ by $f'(\theta_kz)=\mu_af'(z),$ we have $\displaystyle f'(\theta^2_kz)=\mu^2_af'(z).$
Similarly, a same conclusion can be obtained.

\medskip
Thus, the proof of Theorem\ref{th1} is finished.
\end{proof}

\section{The wellposedness of \eqref{es}}
\begin{theorem}\label{pro}
Let $\displaystyle0<\tau<\frac{1}{k}.$ For any given $(u^0,u^1,g_0)\in H^1_L(0,1)\times L^2(0,1)\times L^2(-\tau,0),$ system \eqref{es} admits a unique weak solution $u\in H^1_{loc}(Q^k)$ with the form
\begin{equation}\label{A}
u(x,t)=f(t+x)-f(t-x),\quad f\in H^1(\mathbb R),\qquad a.e.\ \mbox{in}\ Q^k.
\end{equation}
\end{theorem}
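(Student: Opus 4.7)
The plan is to mirror the d'Alembert-based construction from the proof of Theorem \ref{th3}, now adapted to the Dirichlet left trace and the delayed right-feedback. By Lemma \ref{l3} we may write $u(x,t)=\varphi(t+x)+\psi(t-x)$ a.e.\ in $Q^k$ with $\varphi,\psi\in H^1_{loc}(\mathbb{R})$. The homogeneous Dirichlet condition $u(0,t)=0$ on $(0,\infty)$ forces $\varphi(t)+\psi(t)=0$, hence $\psi=-\varphi$ up to a harmless constant that may be absorbed; renaming $\varphi$ as $f$ delivers the representation \eqref{A}, which then automatically satisfies $u(0,t)=0$.

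The initial data pin $f$ down on $[-1,1]$. Differentiating $u^0(x)=f(x)-f(-x)$ in $x$ and combining with $u^1(x)=f'(x)-f'(-x)$ yields
\begin{equation*}
f'(x)=\tfrac{1}{2}\bigl[(u^0)'(x)+u^1(x)\bigr],\qquad f'(-x)=\tfrac{1}{2}\bigl[(u^0)'(x)-u^1(x)\bigr],\qquad x\in(0,1).
\end{equation*}
Integration with the normalisation $f(0)=0$, which is consistent with $u^0(0)=0$ since $u^0\in H^1_L(0,1)$, produces explicit $H^1$ formulas for $f$ on $[-1,1]$.

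To extend $f$ to $[1,+\infty)$ I plug \eqref{A} into the right boundary condition and obtain
\begin{equation*}
(1+\mu_1)\,f'(t+l_k(t))+(1-\mu_1)\,f'(t-l_k(t))=-\mu_2\,u_t\bigl(l_k(t-\tau),t-\tau\bigr).
\end{equation*}
For $t\in(0,\tau)$ the right-hand side equals $-\mu_2\,g_0(t-\tau)$, a known $L^2$ datum, while for $t\ge\tau$ it equals $-\mu_2\bigl[f'((t-\tau)+l_k(t-\tau))-f'((t-\tau)-l_k(t-\tau))\bigr]$. Writing $s=t-l_k(t)=(1-k)t-1$ and $\Phi(s)=t+l_k(t)=\theta_k(s+1)+1$ with $\theta_k=(1+k)/(1-k)$, and using the algebraic identity $\Phi(s-(1-k)\tau)=\Phi(s)-(1+k)\tau$, one sees that the bracketed expression involves only strictly past values $f'(s-(1-k)\tau)$ and $f'(\Phi(s)-(1+k)\tau)$ of $f'$. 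The recurrence thus resolves $f'(\Phi(s))$ in terms of quantities already constructed, provided $1+\mu_1\ne0$. Iterating, starting at $s=-1$ and letting $\Phi$ push forward on each step, fills in $f'$ on $[1,+\infty)$; integration with matching constants chosen to preserve continuity at the breakpoints yields $f\in H^1_{loc}(\mathbb{R})$ and hence the claimed weak solution $u\in H^1_{loc}(Q^k)$. Uniqueness is automatic from the deterministic, step-by-step nature of the construction.

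The main obstacle is the bookkeeping of the induction, verifying at each stage that both $f'(s)$ and $f'(s-(1-k)\tau)$ lie in already-constructed intervals; a short calculation using $\Phi(s)>s$ and the above identity confirms this, but the argument must be laid out carefully. The hypothesis $\tau<1/k$ enters at the outset: it is equivalent to $l_k(-\tau)=1-k\tau>0$, so that the prescribed delayed trace $g_0$ corresponds to points $(l_k(\sigma),\sigma)$ with $l_k(\sigma)\in(1-k\tau,1]\subset(0,1]$ as $\sigma$ ranges over $(-\tau,0)$; without this the initial datum would have no geometric meaning. The degenerate value $\mu_1=-1$ also needs a brief separate treatment, as the leading coefficient in the recurrence collapses, leaving either an algebraic compatibility constraint on the data or only the trivial solution, in parallel with Case 1 of the proof of Theorem \ref{th3}.
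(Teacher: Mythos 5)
Your construction is essentially the paper's: d'Alembert representation $u(x,t)=f(t+x)-f(t-x)$ forced by the Dirichlet trace, $f'$ pinned on $(-1,1)$ by the same formulas $(c_1)$--$(c_2)$, and then a method-of-steps extension of $f'$ along the right boundary relation, with $\mu_1=-1$ singled out as the degenerate case requiring compatibility of $g_0$ with $(u^0,u^1)$ — all of which matches the paper. The one structural difference is your treatment of the delay history: you substitute $u_t(l_k(t-\tau),t-\tau)=g_0(t-\tau)$ directly into the boundary relation on the first window $t\in(0,\tau)$ and only use the trace formula for $t\ge\tau$, so your recurrence never needs $f'$ to the left of $-1$. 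The paper instead uses the history equation $(b')$ to extend $f'$ \emph{leftward} onto $\big(-(1-k)\tau-1,-1\big)$ before extending rightward, and this is exactly where the hypothesis $\tau<1/k$ is genuinely used: when $\tau>2/(1+k)$ the leftward extension is iterative, and the covered intervals accumulate at $-1-\frac{1-k}{k}$, so covering the required interval forces $(1-k)\tau<\frac{1-k}{k}$, i.e.\ $\tau<1/k$. Your stated reason for $\tau<1/k$ (that $l_k(-\tau)>0$ gives the datum ``geometric meaning'') is equivalent as an inequality but is not where the paper needs it; in your streamlined scheme the hypothesis plays essentially no mathematical role, and you should say explicitly that you are reading the last equation of \eqref{es} as prescribing input data rather than as a trace identity for the extended $f$ (which is what the paper's leftward extension secures, and what justifies claiming the representation \eqref{A} on a neighbourhood of the history arguments). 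Finally, be aware that the induction bookkeeping you defer is the bulk of the paper's proof (the case analysis with thresholds $2/(1-k)$, the times $t_N$, and the two time spans); your uniform-gap observation — the new point $\Phi(s)$ exceeds all needed points $s$, $s-(1-k)\tau$, $\Phi(s)-(1+k)\tau$ by at least $\min\{2,(1+k)\tau\}$ — does make a fixed-step induction work and is, if written out, a simpler organization than the paper's, but as it stands it is only asserted.
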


\begin{proof}
It is easy to see that $u$ in \eqref{A} satisfies the wave equation in the sense of distribution and $u(0,t)=0.$

Next, we shall show that there exists a unique $f\in H^1(\mathbb R)$ except for a constant $f(0),$ such that $u$ satisfies
\begin{equation}\label{abc}
\left\{
\begin{aligned}
&u_x\big(l_k(t),t\big)=-\mu_1u_t\big(l_k(t),t\big)-\mu_2u_t\big(l_k(t-\tau),t-\tau\big), \quad t>0\\[2mm]
&u(x,0)=u^0(x), \quad  u_t(x,0)=u^1(x), \quad x\in(0,1) \\[2mm]
&u_t\big(l_k(t-\tau),t-\tau\big)=g_0(t-\tau), \quad t\in(0,\tau)
\end{aligned}
\right.
\end{equation}
where $(u^0,u^1,g_0)\in H^1_L(0,1)\times L^2(0,1)\times L^2(-\tau,0).$ To do that, we only need to prove that \eqref{abc} determine a unique $f'\in L^2(\mathbb R).$

From \eqref{A}, we have
\begin{eqnarray*}
\begin{array}{rll}
u_x\big(l_k(t),t\big)=f'\big(t+l_k(t)\big)+f'\big(t-l_k(t)\big),\quad a.e.~\mbox{in}~\mathbb R^+,\\[3mm]
u_t\big(l_k(t),t\big)=f'\big(t+l_k(t)\big)-f'\big(t-l_k(t)\big),\quad a.e.~\mbox{in}~\mathbb R^+.
\end{array}
\end{eqnarray*}
Substituting them into the first equation of \eqref{abc}, we get
\begin{align}
&(1+\mu_1)f'\big((1+k)t+1\big)+\mu_2f'\big((1+k)(t-\tau)+1\big)\notag\\[1.5mm]
=&(\mu_1-1)f'\big((1-k)t-1\big)+\mu_2f'\big((1-k)(t-\tau)-1\big), \quad a.e.~\mbox{in}~\mathbb R^+.\tag{$a$}\label{a'}
\end{align}
In the same manner, from the third equation of \eqref{abc}, we have
\begin{align}
&f'\big((1+k)(t-\tau)+1\big)-f'\big((1-k)(t-\tau)-1\big)=g_0(t-\tau), \quad a.e.~\mbox{in}~(0,\tau).\tag{$b$}\label{c'}
\end{align}
At last, from \eqref{A} and the second equation of \eqref{abc}, we see that $\displaystyle u(x,0)=f(x)-f(-x)=u^0(x),$ which derives
\begin{align*}
u_x(x,0)=f'(x)+f'(-x)=u^0_x(x).
\end{align*}
On the other hand,
\begin{align*}
u_t(x,0)=f'(x)-f'(-x)=u^1(x).
\end{align*}
Thus,
\begin{align}
&f'(x)=\dfrac{1}{2}\big[u^0_x(x)+u^1(x)\big], \quad a.e.~\mbox{in}~(0,1)\tag{$c_1$}\label{b'1}\\[2mm]
&f'(-x)=\dfrac{1}{2}\big[u^0_x(x)-u^1(x)\big], \quad a.e.~\mbox{in}~(0,1)\tag{$c_2$}\label{b'2}
\end{align}

In the following, based on \eqref{b'1}, \eqref{b'2}, using \eqref{a'}, \eqref{c'}, we are going to extend $f'$ from the interval $(-1,1)$ to $\big(-(1-k)\tau-1, +\infty\big)$ which is its required extension interval.

First, we extend $f'$ toward the left using \eqref{c'}.

To begin with, we rewrite \eqref{c'} as
\begin{align}\label{c2}
f'\big((1-k)(t-\tau)-1\big)=f'\big((1+k)(t-\tau)+1\big)-g_0(t-\tau), \quad a.e.~\mbox{in}~(0,\tau).\tag{$b'$}
\end{align}
Notice that
\begin{align*}
-(1-k)\tau-1<(1-k)(t-\tau)-1<-1,\quad t\in(0,\tau),\\[2mm]
-(1+k)\tau+1<(1+k)(t-\tau)+1<1,\quad t\in(0,\tau).
\end{align*}

Case 1. $-1\leq-(1+k)\tau+1$ ($\tau\leq\dfrac{2}{1+k}$).

In this case, $f'\big((1+k)(t-\tau)+1\big)$ is known by \eqref{b'1} and \eqref{b'2} because it is located on $(-1,1)$ when $t\in(0,\tau).$
Additionally, $g_0$ is a given function and then functions on the right-hand side of \eqref{c2} have been known. Letting the value of $f'$ on the left-hand side of \eqref{c2} be what we want to extend, we can directly define the value of $f'$ on $\big(-(1-k)\tau-1,-1\big)$ using it.

Case 2. $-(1+k)\tau+1<-1$ ($\tau>\dfrac{2}{1+k}$).

We extend $f'$ in several steps.

Step 1. Let
$$
-1<(1+k)(t-\tau)+1<1.
$$
We get $t\in\big(\tau-\dfrac{2}{1+k},\tau\big).$

As $f'\big((1+k)(t-\tau)+1\big)$ is known from \eqref{b'1} and \eqref{b'2} at this time,
and
$$
-1-\dfrac{2(1-k)}{1+k}<(1-k)(t-\tau)-1<-1, \quad t\in\big(\tau-\dfrac{2}{1+k},\tau\big),
$$
in this step, we can define the value of $f'$ on $\big(-1-\dfrac{2(1-k)}{1+k},-1\big)$ using \eqref{c2}.

Step 2. Let
$$
-1-\dfrac{2(1-k)}{1+k}<(1+k)(t-\tau)+1<-1.
$$
We get $t\in\big(\tau-\dfrac{2}{1+k}-\dfrac{2(1-k)}{(1+k)^2},\tau-\dfrac{2}{1+k}\big).$

Since $f'\big((1+k)(t-\tau)+1\big)$ is known on $\big(-1-\dfrac{2(1-k)}{1+k},-1\big)$ based on the result obtained in Step 1 and for $t\in\big(\tau-\dfrac{2}{1+k}-\dfrac{2(1-k)}{(1+k)^2},\tau-\dfrac{2}{1+k}\big),$

$$
-1-\dfrac{2(1-k)}{1+k}-\dfrac{2(1-k)^2}{(1+k)^2}<(1-k)(t-\tau)-1<-1-\dfrac{2(1-k)}{1+k},
$$
using \eqref{c2} again, we can define the value of $f'$ on
$$\Big(-1-\dfrac{2(1-k)}{1+k}-\dfrac{2(1-k)^2}{(1+k)^2},-1-\dfrac{2(1-k)}{1+k}\Big).$$

Step 3. Let
$$
-1-\dfrac{2(1-k)}{1+k}-\dfrac{2(1-k)^2}{(1+k)^2}<(1+k)(t-\tau)+1<-1-\dfrac{2(1-k)}{1+k},
$$
then
$$
t\in\Big(\tau-\dfrac{2}{1+k}-\dfrac{2(1-k)}{(1+k)^2}-\dfrac{2(1-k)^2}{(1+k)^3},\tau-\dfrac{2}{1+k}-\dfrac{2(1-k)}{(1+k)^2}\Big).
$$
For this $t,$
$$
-1-\dfrac{2(1-k)}{1+k}-\dfrac{2(1-k)^2}{(1+k)^2}-\dfrac{2(1-k)^3}{(1+k)^3}<(1-k)(t-\tau)-1<-1-\dfrac{2(1-k)}{1+k}-\dfrac{2(1-k)^2}{(1+k)^2}.
$$
Therefore, repeating above process until the n-th step, we can already define the value of $f'$ on $\displaystyle\big(-1-\sum^n_{i=1}\frac{2(1-k)^i}{(1+k)^i},-1\big)$ corresponding to $\displaystyle t\in\big(\tau-\sum^n_{i=1}\frac{2(1-k)^{i-1}}{(1+k)^i},\tau),$ using \eqref{c2}. In order to reach the target extension interval $\displaystyle \big(-(1-k)\tau-1,-1\big),$ we shall request $\displaystyle-1-\sum^n_{i=1}\frac{2(1-k)^i}{(1+k)^i}<-(1-k)\tau-1$ for some $n.$

Set $\displaystyle S_n=-1-\sum^n_{i=1}\frac{2(1-k)^i}{(1+k)^i}$ and let $\displaystyle \lim\limits_{n\rightarrow\infty}S_n=-1-\frac{1-k}{k}<-(1-k)\tau-1.$ It follows that $\displaystyle\tau<\frac{1}{k}.$
\medskip

After having done the extension of $f'$ on $\big(-(1-k)\tau-1,-1\big),$ we start to extend $f'$ toward the right using \eqref{a'}.

$(1)$ $\mu_1\neq\pm1.$
\eqref{a'} is equivalent to
\begin{align}
(1+\mu_1)f'\big((1+k)t+1\big)
=&(\mu_1-1)f'\big((1-k)t-1\big)+\mu_2f'\big((1-k)(t-\tau)-1\big) \notag\\[1mm]
&-\mu_2f'\big((1+k)(t-\tau)+1\big)\quad a.e.~\mbox{in}~\mathbb R^+.\tag{$a_1$}\label{a2}
\end{align}

We want functions on the right-hand side of \eqref{a2} to be located in intervals where $f'$ has been known, then the left-hand side of \eqref{a2} is the value of $f'$ that we're going to define.

Case 1. $\tau\leq\dfrac{2}{1-k}<\dfrac{1}{k}$ or $\tau<\dfrac{1}{k}\leq\dfrac{2}{1-k}.$

\noindent In this case,
$$
-1<(1-k)t-1<1,\quad t\in(0,\tau),
$$
$$
-(1-k)\tau-1<-(1+k)\tau+1<(1+k)(t-\tau)+1<1,\quad t\in(0,\tau),
$$
and
\begin{align*}
(1-k)(t-\tau)-1<(1-k)t-1<(1+k)(t-\tau)+1,\quad t>\tau.
\end{align*}

Step 1. Let $\displaystyle(1+k)(t-\tau)+1<1$ $\Rightarrow t<\tau.$
For $t\in(0,\tau),$ functions on the right-hand side of \eqref{a2} are located on $\big(-(1-k)\tau-1,1\big)$ where $f'$ has been known. Hence, using \eqref{a2}, we are able to define the value of $f'$ on $\displaystyle\big(1,(1+k)\tau+1\big).$

Step 2. Let $(1+k)(t-\tau)+1<(1+k)\tau+1$ $\Rightarrow t<2\tau.$ For $t\in(\tau,2\tau),$ using the result obtained in Step 1 and \eqref{a2} again, we can define the value of $f'$ on $\big((1+k)\tau+1,(1+k)2\tau+1\big).$

Step 3. We extend $f'$ with time span $\tau,$ that is, $\mathbb R^+=\bigcup\limits_{n=1}^{+\infty} \big((n-1)\tau,n\tau\big).$ Consequently, we can define $f'$ on $(1,+\infty)= \bigcup\limits_{n=1}^{+\infty} \big((1+k)(n-1)\tau+1,(1+k)n\tau+1\big)$ using \eqref{a2}.

Case 2. $\dfrac{2}{1-k}<\tau<\dfrac{1}{k}.$

In this case, we use two different time spans to extend $f'.$

Step 1. For $t\in\big(0,\dfrac{2}{1-k}\big),$ observing that
$$
-(1-k)\tau-1<(1+k)(t-\tau)+1<(1-k)t-1<1,
$$
at this time, we can define the value of $f'$ on $\big(1,1+\dfrac{2(1+k)}{1-k}\big)$ using \eqref{a2}.

Step 2. If $\displaystyle \frac{2}{1-k}<\tau\leq\frac{2(1+k)}{(1-k)^2},$ then
$$
(1-k)t-1\leq(1+k)(t-\tau)+1,\quad \ t\geq\frac{2}{1-k}+\tau.
$$
Let $\displaystyle(1+k)(t-\tau)+1<1+\dfrac{2(1+k)}{1-k}$$\Rightarrow$ $t<\dfrac{2}{1-k}+\tau.$
Hence, for $t\in\big(\dfrac{2}{1-k},\dfrac{2}{1-k}+\tau\big),$ using \eqref{a2}, we can define the value of $f'$ on $\Big(1+\dfrac{2(1+k)}{1-k},1+\dfrac{2(1+k)}{1-k}+(1+k)\tau\Big).$
In next steps, with fixed time span $\tau.$ we extend $f'$ on $\displaystyle\bigcup\limits^{+\infty}_{n=1}\Big(1,1+\dfrac{2(1+k)}{1-k}+(1+k)n\tau\Big).$

If $\displaystyle\tau>\frac{2(1+k)}{(1-k)^2},$ then
$$
(1+k)(t-\tau)+1<(1-k)t-1,\quad t<\frac{2}{1-k}+\frac{2(1+k)}{(1-k)^2}.
$$
Let $\displaystyle(1-k)t-1<1+\dfrac{2(1+k)}{1-k}$$\Rightarrow$ $\displaystyle t<\frac{2}{1-k}+\frac{2(1+k)}{(1-k)^2}.$
Using \eqref{a2}, we can define the value of $f'$ on $\Big(1+\dfrac{2(1+k)}{1-k},1+\dfrac{2(1+k)}{1-k}+\dfrac{2(1+k)^2}{(1-k)^2}\Big)$
with respect to $\displaystyle t\in \Big(\frac{2}{1-k},\frac{2}{1-k}+\frac{2(1+k)}{(1-k)^2}\Big).$

Step 3. For fixed $\tau,$ there exists a unique positive integer $N,$ such that
$$
\dfrac{2(1+k)^{N-1}}{(1-k)^N} <\tau\leq \dfrac{2(1+k)^N}{(1-k)^{N+1}}.
$$
Let $\displaystyle t_N=\sum\limits^{N}_{i=1}\dfrac{2(1+k)^{i-1}}{(1-k)^i}.$
Moreover,
\begin{align*}
&(1+k)(t-\tau)+1<(1-k)t-1,\quad t<t_N,\\[2mm]
&(1-k)t-1\leq(1+k)(t-\tau)+1,\quad t\geq t_N+\tau.
\end{align*}
For $t<t_N,$ We divide $\displaystyle\big(0,t_N\big)$ to $\displaystyle\bigcup\limits^N_{i=1}\big(t_{n-1},t_n\big),$ where $t_0=0,$ $\displaystyle t_n=\sum\limits^{n}_{i=1}\dfrac{2(1+k)^{i-1}}{(1-k)^i}.$ For every $\big(t_{n-1},t_n\big),$ $1\leq n\leq N,$ similar to Step 1 or the second part of Step 2, we can define the value of $f'$ on $\displaystyle\big(1+\sum\limits^{n-1}_{i=1}\dfrac{2(1+k)^{i}}{(1-k)^i}, 1+\sum\limits^{n}_{i=1}\dfrac{2(1+k)^{i}}{(1-k)^i}\big),$ using \eqref{a2}. For $t>t_N,$ let the time span is $\tau,$ that is, $t_n=t_{n-1}+\tau,$ $n>N.$ For every $(t_{n-1},t_n),$ $n>N,$ using \eqref{a2} again, we can define the value of $f'$ on
$$
\Big(1+\sum\limits^{N}_{i=1}\dfrac{2(1+k)^{i}}{(1-k)^i}+(1+k)(n-1-N)\tau,1+\sum\limits^{N}_{i=1}\dfrac{2(1+k)^{i}}{(1-k)^i}+(1+k)(n-N)\tau\Big).
$$
In summary, for $\displaystyle\mathbb R^+=\bigcup\limits^{+\infty}_{n=1}(t_{n-1},t_n),$ we can define the value of $f'$ on
$\displaystyle\big(1,+\infty\big)$ by \eqref{a2}.
\medskip

$(2)$ $\mu_1=1.$ from \eqref{a'}, we get that for almost everywhere $t>0,$
\begin{align}
2f'\big((1+k)t+1\big)=\mu_2f'\big((1-k)(t-\tau)-1\big)-\mu_2f'\big((1+k)(t-\tau)+1\big).\tag{$a_2$}\label{a3}
\end{align}
Similar to Case 1, with fixed time span $\tau,$ we can define the value of $f'$ on $\displaystyle\big(1,+\infty\big)$ using \eqref{a3}.

$(3)$ $\mu_1=-1.$ from \eqref{a'}, we have that for almost everywhere $t>0,$
\begin{align}
\mu_2f'\big((1+k)(t-\tau)+1\big)-\mu_2f'\big((1-k)(t-\tau)-1\big)=-2f'\big((1-k)t-1\big).\tag{$a_3$}\label{a4}
\end{align}
Combining \eqref{a4} with \eqref{c'}, we deduce that
\begin{align*}
\mu_2g_0(t-\tau)=-2f'\big((1-k)t-1\big) \quad a.e.~\mbox{in}~\mathbb (0,\tau).
\end{align*}
Therefore, $g_0,$ $\big(u^0,u^1\big)$ shall satisfy certain compatibility conditions for $f'$ to be well defined. Since the remainder of the proof is very similar to what we discussed earlier, we omit it.
\end{proof}

\section{Proof of Theorem \ref{th2}}

\begin{proof}

It is easy to check that
\begin{align}\label{E'3}
E'_2(t)=&\big[\frac{k}{2}(1+\mu^2_1)-\mu_1+\frac{\xi}{2\tau}\big]u^2_t\big(l_k(t),t\big)+\big(\frac{k}{2}\mu^2_2-\frac{\xi}{2\tau}\big)u^2_t\big(l_k(t-\tau),t-\tau\big)\notag\\[2mm]
&+\big(k\mu_1\mu_2-\mu_2\big)u_t\big(l_k(t),t\big)u_t\big(l_k(t-\tau),t-\tau\big).
\end{align}
We estimate it in three classifications, using the Cauchy's inequality.
\medskip

$(1)$ $k\mu_1-1=0,\ i.e.\  \mu_1=\dfrac{1}{k}.$ Substituting it into \eqref{E'3}, we get

$$
E'_2(t)=\big(\frac{k}{2}-\frac{1}{2k}+\frac{\xi}{2\tau}\big)u^2_t\big(l_k(t),t\big)+\big(\frac{k}{2}\mu^2_2-\frac{\xi}{2\tau}\big)u^2_t\big(l_k(t-\tau),t-\tau\big).
$$
On the one hand, Let $\displaystyle\frac{k}{2}-\frac{1}{2k}+\frac{\xi}{2\tau}<0$ and $\displaystyle\frac{k}{2}\mu^2_2-\frac{\xi}{2\tau}<0.$ It follows that
\begin{equation}\label{u5}
\left|\mu_2\right|<\frac{\sqrt{1-k^2}}{k},
\end{equation}
and
$$
\frac{\xi}{\dfrac{1}{k}-k}<\tau<\frac{\xi}{k\mu^2_2}.
$$
Under above assumptions, we conclude that there exist a constant $c_1>0,$ such that
$$
E'_2(t)\leq-c_1\Big[u^2_t\big(l_k(t),t\big)+u^2_t\big(l_k(t-\tau),t-\tau\big)\Big],
$$
where $\displaystyle-c_1=\max\Big\{\frac{k}{2}-\frac{1}{2k}+\frac{\xi}{2\tau},\frac{k}{2}\mu^2_2-\frac{\xi}{2\tau}\Big\}.$

On the other hand, Let $\displaystyle\frac{k}{2}-\frac{1}{2k}+\frac{\xi}{2\tau}\geq0$ and $\displaystyle\frac{k}{2}\mu^2_2-\frac{\xi}{2\tau}\geq0.$ we have
\begin{equation}\label{u6}
\left|\mu_2\right|\geq\frac{\sqrt{1-k^2}}{k},
\end{equation}
and
$$
\frac{\xi}{k\mu^2_2}\leq\tau\leq\frac{\xi}{\dfrac{1}{k}-k}.
$$
Thus, there exist a constant $c'_1\geq0,$ such that
$$
E'_2(t)\geq c'_1\Big[u^2_t\big(l_k(t),t\big)+u^2_t\big(l_k(t-\tau),t-\tau\big)\Big],
$$
where $\displaystyle c'_1=\min\Big\{\frac{k}{2}-\frac{1}{2k}+\frac{\xi}{2\tau},\frac{k}{2}\mu^2_2-\frac{\xi}{2\tau}\Big\}.$

$(2)$ $k\mu_1-1<0,\ i.e.\  \mu_1<\dfrac{1}{k}.$ In this case, using the Cauchy's inequality to amplify the right hand of \eqref{E'3}, we get
\begin{align*}
E'_2(t)\leq&\big[\frac{k}{2}(1+\mu^2_1)-\mu_1+\frac{\xi}{2\tau}\big]u^2_t\big(l_k(t),t\big)+\big(\frac{k}{2}\mu^2_2-\frac{\xi}{2\tau}\big)u^2_t\big(l_k(t-\tau),t-\tau\big)\notag\\[2mm]
&+\frac{1}{2}\big(1-k\mu_1\big)\left|\mu_2\right|\Big[u^2_t\big(l_k(t),t\big)+u^2_t\big(l_k(t-\tau),t-\tau\big)\Big].
\end{align*}
Let
\begin{equation*}
\left\{
\begin{aligned}
&A\big(k,\mu_1,\mu_2,\xi,\tau\big)=\frac{k}{2}(1+\mu^2_1)-\mu_1+\frac{1}{2}\big(1-k\mu_1\big)\left|\mu_2\right|+\frac{\xi}{2\tau}<0,\notag\\[2mm]
&B\big(k,\mu_1,\mu_2,\xi,\tau\big)=\frac{k}{2}\mu^2_2+\frac{1}{2}\big(1-k\mu_1\big)\left|\mu_2\right|-\frac{\xi}{2\tau}<0,
\end{aligned}
\right.
\end{equation*}
that is
$$
\frac{k}{2}\mu^2_2+\frac{1}{2}\big(1-k\mu_1\big)\left|\mu_2\right|<\frac{\xi}{2\tau}<\mu_1-\frac{k}{2}(1+\mu^2_1)-\frac{1}{2}(1-k\mu_1)\left|\mu_2\right|.
$$
It may be true if
$$
\frac{k}{2}\mu^2_2+\frac{1}{2}\big(1-k\mu_1\big)\left|\mu_2\right|<\mu_1-\frac{k}{2}(1+\mu^2_1)-\frac{1}{2}(1-k\mu_1)\left|\mu_2\right|.
$$
Thus
\begin{equation}\label{u1}
\dfrac{1-\sqrt{1-k^2}}{k}<\mu_1<\frac{1}{k}, \qquad \left|\mu_2\right|<\dfrac{(k\mu_1-1)+\sqrt{1-k^2}}{k},
\end{equation}
and then
$$
\dfrac{\xi}{2\mu_1-k(1+\mu^2_1)-(1-k\mu_1)\left|\mu_2\right|}<\tau<\dfrac{\xi}{k\mu^2_2+(1-k\mu_1)\left|\mu_2\right|}.
$$
Therefore, there exist a constant $c_2>0,$ such that
$$
E'_2(t)\leq-c_2\Big[u^2_t\big(l_k(t),t\big)+u^2_t\big(l_k(t-\tau),t-\tau\big)\Big],
$$
where
$\displaystyle-c_2=\max\Big\{A\big(k,\mu_1,\mu_2,\xi,\tau\big),B\big(k,\mu_1,\mu_2,\xi,\tau\big)\Big\}.$

\medskip

On the other hand, one has
\begin{align*}
E'_2(t)\geq&\big[\frac{k}{2}(1+\mu^2_1)-\mu_1+\frac{\xi}{2\tau}\big]u^2_t\big(l_k(t),t\big)+\big(\frac{k}{2}\mu^2_2-\frac{\xi}{2\tau}\big)u^2_t\big(l_k(t-\tau),t-\tau\big)\notag\\[2mm]
&+\frac{1}{2}\big(k\mu_1-1\big)\left|\mu_2\right|\Big[u^2_t\big(l_k(t),t\big)+u^2_t\big(l_k(t-\tau),t-\tau\big)\Big].
\end{align*}
Let
\begin{equation*}
\left\{
\begin{aligned}
&C\big(k,\mu_1,\mu_2,\xi,\tau\big)=\frac{k}{2}(1+\mu^2_1)-\mu_1+\frac{1}{2}\big(k\mu_1-1\big)\left|\mu_2\right|+\frac{\xi}{2\tau}\geq0,\notag\\[2mm]
&D\big(k,\mu_1,\mu_2,\xi,\tau\big)=\frac{k}{2}\mu^2_2+\frac{1}{2}\big(k\mu_1-1\big)\left|\mu_2\right|-\frac{\xi}{2\tau}\geq0.
\end{aligned}
\right.
\end{equation*}
We get
\begin{equation}\label{u2}
\mu_1<\frac{1}{k},\qquad \left|\mu_2\right|\geq\dfrac{(1-k\mu_1)+\sqrt{1-k^2}}{k},
\end{equation}
and
$$
\dfrac{\xi}{k\mu^2_2+(k\mu_1-1)\left|\mu_2\right|}\leq\tau\leq\dfrac{\xi}{2\mu_1-k(1+\mu^2_1)-(k\mu_1-1)\left|\mu_2\right|}.
$$
Thus, there exist a constant $c'_2\geq0,$ such that
$$
E'_2(t)\geq c'_2\Big[u^2_t\big(l_k(t),t\big)+u^2_t\big(l_k(t-\tau),t-\tau\big)\Big],
$$
where $\displaystyle c'_2=\min\Big\{C\big(k,\mu_1,\mu_2,\xi,\tau\big),D\big(k,\mu_1,\mu_2,\xi,\tau\big)\Big\}.$

$(3)$ $k\mu_1-1>0,\ i.e.\  \mu_1>\dfrac{1}{k}.$ With an argument similar to that in $(1),$ we get that on one side,

\begin{equation}\label{u3}
\frac{1}{k}<\mu_1<\frac{1+\sqrt{1+k^2}}{k},\qquad \left|\mu_2\right|<\dfrac{(1-k\mu_1)+\sqrt{1-k^2}}{k},
\end{equation}
and
$$
\dfrac{\xi}{2\mu_1-k(1+\mu^2_1)-(k\mu_1-1)\left|\mu_2\right|}<\tau<\dfrac{\xi}{k\mu^2_2+(k\mu_1-1)\left|\mu_2\right|}.
$$
There exist a constant $c_3>0$ (depending on $\mu_1,\mu_2,\xi,\tau$) such that
$$
E'_2(t)\leq-c_3\Big[u^2_t\big(l_k(t),t\big)+u^2_t\big(l_k(t-\tau),t-\tau\big)\Big].
$$
On the other side,
\begin{equation}\label{u4}
\mu_1>\frac{1}{k},\qquad \left|\mu_2\right|\geq\dfrac{(k\mu_1-1)+\sqrt{1-k^2}}{k},
\end{equation}
and
$$
\dfrac{\xi}{k\mu^2_2+(1-k\mu_1)\left|\mu_2\right|}\leq\tau\leq\dfrac{\xi}{2\mu_1-k(1+\mu^2_1)-(1-k\mu_1)\left|\mu_2\right|}.
$$
There exist a constant $c'_3\geq0$ (depending on $\mu_1,\mu_2,\xi,\tau$) such that
$$
E'_2(t)\geq c'_3\Big[u^2_t\big(l_k(t),t\big)+u^2_t\big(l_k(t-\tau),t-\tau\big)\Big].
$$

From the above argument, we can see that the size of coefficients $\mu_1,$ $\mu_2$ and the value of $\tau$ have an impact on stability of the system. Comparing \eqref{u5},\eqref{u1},\eqref{u3} and \eqref{u6},\eqref{u2},\eqref{u4}, respectively, we find that

(F1) Stabilization of the system requires that $\mu_1$ belongs to the range where the system is stable without time delay and the coefficient of time-delay term $\mu_2$ cannot be large.

(F2) No matter what $\mu_1$ is, the system  loses stability if the time-delay term coefficient $\mu_2$ is too large.
\end{proof}

\begin{remark}
In the process of estimating $\eqref{E'3}$ above, we made the coefficients both negative or both positive. In fact, if the coefficient is a plus and a minus, it is complicated, may be a complex fluctuation in itself.
\end{remark}

%{\bf Note.} After finishing the main part of this paper, we noticed that the author B. H. Haak and D. T. Hoang \cite{h} discussed the boundary feedback stabilization of the 1-D wave equation under general boundary conditions in the last section supplemented by the final publication of their paper. Part results of Theorem \ref{th1} were contained in their article. However, by applying different methods (the multiplier method and constructing particular solutions), we get a more detailed estimate for problem \eqref{e1}. Our work also further study the Newman problem and the time delay problem.

%For acknowledgements section, please don't number the section, please begin it with \section*{Acknowledgements}
%\section*{Acknowledgments}

% You may incorporate your references as follows in your main tex file.
% Using BibTex is not recommended but can be handled.

\end{document}